\documentclass[11pt]{article}

\usepackage{amsthm,amsmath,stmaryrd,bbm,geometry,color}
\usepackage{amssymb}
\usepackage[english]{babel}
\usepackage[utf8]{inputenc}
\usepackage{graphicx}
\usepackage{verbatim}
\usepackage{enumitem}
\usepackage{mathtools}
\usepackage[titletoc]{appendix}
\usepackage{bm}
\usepackage{authblk}

\setcounter{tocdepth}{2}
\geometry{hscale=0.8,vscale=0.85,centering}
%
\usepackage[dvipsnames]{xcolor}
\usepackage[hyperindex=true,frenchlinks=true,colorlinks=true,
citecolor=Mahogany,linkcolor=Red,urlcolor=Tan,linktocpage]{hyperref}

\usepackage{tikz}
\usetikzlibrary{shapes}
\usetikzlibrary{fit}
\usetikzlibrary{decorations.pathmorphing}


\linespread{1.15}

\title{Functional central limit theorem and Marcinkiewicz strong law of large numbers 
for Hilbert-valued $U$-statistics of absolutely regular data}
\author{Davide Giraudo}

\date{\today}
\affil[$\dagger$]{Institut de Recherche Mathématique Avancée
UMR 7501, Université de Strasbourg and CNRS
7 rue René Descartes
67000 Strasbourg, France}

\numberwithin{equation}{section}
\setcounter{tocdepth}{2}
\renewcommand{\leq}{\leqslant}
\renewcommand{\geq}{\geqslant}

\newtheorem{Theorem}{Theorem}[section]
\newtheorem{Proposition}[Theorem]{Proposition}
\newtheorem{Lemma}[Theorem]{Lemma}
\newtheorem{Definition}[Theorem]{Definition}


\theoremstyle{remark}

\tikzstyle{Vertex}=[circle,draw=LimeGreen!80,fill=LimeGreen!8,
inner sep=1pt,minimum size=2mm,line width=1pt,font=\scriptsize]
\tikzstyle{Node}=[Vertex,draw=RoyalBlue!80,fill=RoyalBlue!8,inner sep=1.5pt]
\tikzstyle{Leaf}=[rectangle,draw=Black!70,fill=Black!16,
inner sep=0pt,minimum size=1mm,line width=1.25pt]
\tikzstyle{Edge}=[Maroon!80,cap=round,line width=1pt]
\tikzstyle{Mark1}=[draw=BrickRed!80,fill=BrickRed!8]
\tikzstyle{Mark2}=[draw=BurntOrange!80,fill=BurntOrange!8]
\tikzstyle{EdgeRew}=[->,RedOrange!80,cap=round,thick]


\newcommand{\Aca}{\mathcal{A}}
\newcommand{\Bca}{\mathcal{B}}

\newcommand{\Fca}{\mathcal{F}}

\newcommand{\Sca}{\mathcal{S}}
\newcommand{\Uca}{\mathcal{U}}

\newcommand \ens[1]{\left\{ #1\right\}}

\newcommand \R{\mathbb R}

\newcommand \N{\mathbb N}
\newcommand \PP{\mathbb P}

\newcommand{\E}[1]{\mathbb E\left[#1\right]}

\newcommand \Z{\mathbb Z}

\newcommand \abs[1]{\left|#1\right|}
\newcommand \eps{\varepsilon}

\newcommand{\pr}[1]{\left(#1\right)}
\newcommand{\norm}[1]{\left\lVert #1 \right\rVert}

\newcommand{\ind}[1]{\mathbf{1}_{#1}}

\newcommand{\ent}[1]{\left\lfloor #1\right\rfloor}

\newcommand{\Hi}{\mathbb{H}}
\newcommand{\scal}[2]{\left\langle #1,#2\right\rangle}
\allowdisplaybreaks
\begin{document}
\maketitle
\begin{abstract}
In this paper, we investigate the functional central limit theorem and the Marcinkiewicz strong law of large numbers for $U$-statistics having absolutely regular data and taking value in a separable Hilbert space. The novelty of our approach consists in using coupling in order to formulate a deviation inequality for original $U$-statistic, where the upper bound involves the mixing coefficient and the tail of several $U$-statistics of i.i.d.\ data. 
The presented results improve the known results in several directions: the case of metric space valued data is considered as well as Hilbert space valued, and the mixing rates are less restrictive in a wide range of parameters.
\end{abstract}

\section{Main results}

In all this paper, we will consider $U$-statistics of order two taking values in a separable Hilbert space $\Hi$ defined as follows: given a sequence of random variables $\pr{X_i}_{i\geq 1}$   taking values in a separable metric space $\pr{S,d}$ and $h\colon S\times S\to \Hi$ is measurable,  
\begin{equation}
U_n\pr{h}=\sum_{1\leq i<j\leq n}h\pr{X_i,X_j}.
\end{equation}

Halmos \cite{MR0015746} and Hoeffding \cite{MR0026294} showed that when $\pr{X_i}_{i\geq 1}$ is i.i.d., $\Hi=\R$ and $\E{\abs{h\pr{X_1,X_2}}}<\infty$, $U_n\pr{h}/\binom n2$ is a consistent estimator of $\E{ h\pr{X_1,X_2}}$. This results has been then extended to Hilbert valued case by Borovskikh in \cite{MR0968922}. Asymptotic normality 
has also been established under the assumption $\E{h\pr{X_1,X_2}^2}<\infty$ in  \cite{MR0026294}. Convergence rates in the central limit theorem for Hilbert space valued $U$-statistics were also considered in \cite{MR1337065}.

This paper is devoted to the obtention of a functional central central limit theorem and Marcinkiewicz strong law of large numbers for $U$-statistics whose 
data is a strictly stationary sequence and taking values in a separable Hilbert space. 
The motivation behind the consideration of vector-valued $U$-statistics 
is to consider spatial sign for robust tests (see \cite{MR3650400,wegner2023robust,MR4499388}), or Wilcoxon-Mann-Whitney-type test
\cite{MR3335109}. 
Moreover, the assumption that the random variables $X_i$ take values in a metric space
instead of the real line is useful
 in order to consider 
function space analogue of Gini’s mean difference (see Section~3.2 in \cite{MR3514512}), or correlation dimension for metric space valued data (cf. \cite{vandelft2023statistical}).
Moreover, high dimensional or functional data can be treated via the use of Hilbert space valued $U$-statistics, see for instance \cite{math11010161,MR4573388}. 
Kendall's tau for functional data can be also treated via the use of Hilbert value $U$-statistics, see \cite{MR4233414}.
We refer the reader to the book \cite{MR0889097} for a complete description of $U$-statistics taking values in Hilbert spaces.

The classical strong law of large numbers for $U$-statistics of strictly stationary data without dependence condition has been considered in \cite{MR1363941,MR1687339,dehling2023remarks}.
A strong law of large numbers has been established in \cite{MR2915087} 
for $U$-statistics of arbitrary order whose data comes from a Markov chain.
Arcones \cite{MR1624866} showed a law of large numbers for $U$-statistics of order $m$, where the normalisation is $n^m$. A similar result has been obtained for $2m$-wise independent sequences, that is, sequences $\pr{X_i}_{i\geq 1}$ such that 
for each $i_1<\dots<i_{2m}$, the random variables $X_{i_1},\dots,X_{i_{2m}}$ are independent. Related results on $V$-functionals of $\alpha$-mixing data were estalished in \cite{MR3259868}.
 In \cite{MR4243516}, the law of large number for $U$-statistics of order two whose data comes from a function of an i.i.d.\ sequence has been investigated.

In order to study the asymptotic behavior of $U_n\pr{h}$, we decompose the kernel as follows: take an independent copy $X'_1$ of $X_1$ and let 
\begin{equation}\label{eq:def_h10}
h_{1,0}\pr{x}=\E{h\pr{x,X'_1}}-\E{h\pr{X_1,X'_1}}
\end{equation}
\begin{equation}\label{eq:def_h01}
h_{0,1}\pr{y}=\E{h\pr{X_1,y}}-\E{h\pr{X_1,X'_1}}
\end{equation}
and
\begin{equation}\label{eq:def_h2}
h_2\pr{x,y}=h\pr{x,y}-h_{1,0}\pr{x}-h_{0,1}\pr{y}-\E{h\pr{X_1,X'_1}}.
\end{equation}
In this way,
\begin{equation}\label{eq:Hoeffding}
U_n\pr{h}
= \sum_{i=1}^{n-1}\pr{n-i}\pr{h_{1,0}\pr{X_i}-\E{h_{1,0}\pr{X_i}}}
+ \sum_{j=2}^{n}\pr{j-1}\pr{h_{0,1}\pr{X_j}-\E{h_{0,1}\pr{X_j}}}
+   U_n\pr{h_2}
\end{equation}
and when $\pr{X_i}_{i\geq 1}$ is i.i.d.\ , one has 
\begin{equation}
\E{h_2\pr{X_i,X_j}\mid\sigma\pr{X_k,k\leq j-1}}=0
=\E{h_2\pr{X_i,X_j}\mid\sigma\pr{X_k,k\geq i+1}}
\end{equation}
hence martingale and reversed martingale properties can be used in order to control moments of the sum over $j$ and $i$ respectively. 

It is also worth pointing out that when $h$ is symmetric, that is, $h\pr{x,y}=h\pr{y,x}$ for each $x,y\in S$, the functions $h_{1,0}$ and $h_{0,1}$ coincide hence 
\eqref{eq:Hoeffding} admits the simpler form 
\begin{equation}
\label{eq:Hoeffding_sym}
U_n\pr{h}
= n\sum_{k=1}^{n} \pr{h_{1,0}\pr{X_k}-\E{h_{1,0}\pr{X_k}}}+   U_n\pr{h_2}.
\end{equation}

The dependence in our results will be quantified by the so-called mixing coefficients.
Let $\pr{\Omega,\Fca,\PP}$ be a probability space. 
The $\alpha$-mixing and $\beta$-mixing coefficients between two sub-$\sigma$-algebras 
$\Aca$ and $\Bca$ of $\Fca$ are defined respectively by 

\begin{equation}
 \alpha\pr{\Aca,\Bca}=\sup\ens{
 \abs{\PP\pr{A\cap B}-\PP\pr{A}\PP\pr{B}},A\in\Aca, B\in \Bca
 };
\end{equation}
\begin{equation}
 \beta\pr{\Aca,\Bca}=\frac 12\sup\ens{
 \sum_{i=1}^I\sum_{j=1}^J\abs{\PP\pr{A_i\cap B_j}-
 \PP\pr{A_i}\PP\pr{B_j}}},
\end{equation}
where the supremum runs over all the partitions 
$\pr{A_i}_{i=1}^I$ and $\pr{B_j}_{j=1}^J$ of $\Omega$ of 
elements of $\Aca$ and $\Bca$ respectively.
Given a strictly stationary sequence $\pr{X_i}_{i\geq 1}$, we associate its 
sequences of $\alpha$ and $\beta$-mixing coefficients 
by letting 
\begin{equation}
\alpha\pr{k}:=\sup_{\ell\geq 1}
\alpha\pr{\Fca_1^\ell,\Fca_{\ell+k}^{\infty}},
\end{equation}
\begin{equation}
\beta\pr{k}:=\sup_{\ell\geq 1}
\beta\pr{\Fca_1^\ell,\Fca_{\ell+k}^{\infty}},
\end{equation}
where $\Fca_u^v$, $1\leq u\leq v\leq +\infty$ is 
the $\sigma$-algebra generated by the random variables 
$X_i$, $u\leq i\leq v$ ($u\leq i$ for $v=\infty$). A sequence $\pr{X_i}_{i\geq 1}$ is said to be absolutely regular if $\lim_{k\to\infty}\beta\pr{k}=0$.

The paper is organized as follows: in Subsection~\ref{subsec:TLCF}, we will formulate a 
result for a partial sum process built on a $U$-statistic off absolutely regular data and 
in Subsection~\ref{subsec:SLLN}, results on the strong law of large numbers, distinguishing the degenerated and non-degenerated cases. The proofs are given 
in Section~\ref{sec:proof} and are a consequence of an inequality given in Subsection~\ref{subsec:dev_ineg}.
\subsection{Functional central limit theorem}\label{subsec:TLCF}

Define the process 
\begin{equation}
\Uca_{n,h}\pr{t}=
\sum_{1\leq i<j\leq\ent{nt}}h\pr{X_i,X_j}+\pr{nt-\ent{nt}}
\sum_{i=1}^{\ent{nt}}h\pr{X_i,X_{\ent{nt}+1}}.
\end{equation}
Notice that $\Uca_{n,h}\pr{k/n}=U_{k}\pr{h}$ hence the process $\Uca_{n,h}$ contains the information of all the values of $U_k\pr{h}$, $2\leq k\leq n$. Moreover,  for each $\omega\in\Omega$, the map $t\mapsto \Uca_{n,h}\pr{t}$ belongs to $C_{\Hi}[0,1]$, the space of $\Hi$-valued continuous functions defined on the unit interval endowed with the norm $\norm{x}_{\infty}=\sup_{t\in [0,1]}\norm{x\pr{t}}_{\Hi}$, because we interpolate linearly between the points $\pr{k/n,U_{k}\pr{h}}$. 

The limiting process will be described as follows.

\begin{Definition}
We say that a non-negative self-adjoint operator $\Gamma\colon\Hi\to\Hi$ is an $\Sca\pr{\Hi}$ operator if for some Hilbert basis $\pr{e_i}_{i\geq 1}$, $\sum_{i=1}^\infty\scal{\Gamma e_i}{e_i}_{\Hi}<\infty$.
\end{Definition}
\begin{Definition}
Let $\pi_t\colon C_{\Hi}[0,1]$ be the projection map, that is, $\pi_t\pr{x}=x\pr{t}$.  For $\Gamma\in\Sca\pr{\Hi}$, denote by 
$W_\Gamma$ the process   such that 
\begin{enumerate}[label=\arabic*]
\item $W_\Gamma\pr{0}=0$,
\item for all $0\leq t_0<t_1<\dots<t_N\leq 1$, the increments
$\pr{W\pr{t_i}-W\pr{t_{i-1}}}_{i=0}^N$ are independent and $W\pr{t_i}-W\pr{t_{i-1}}$ has a Gaussian distribution on $\Hi$ with mean zero and covariance operator 
$\pr{t_i-t_{t-1}}\Gamma$.
\end{enumerate}
\end{Definition}

We are now in position to state our result on functional central limit theorem for $U$-statistics of absolutely regular data taking values in a separable Hilbert space.

\begin{Theorem}\label{thm:TLFC}
Let $\pr{X_i}_{i\in\Z}$ be a strictly stationary sequence of random variables taking values in a separable metric space $\pr{S,d}$ and let $h\colon S\times S\to \Hi$ be a symmetric measurable function, where $S\times S$ is endowed with the product $\sigma$-algebra and the Hilbert space $\pr{\Hi,\scal{\cdot}{\cdot}_{\Hi}}$ with the $\sigma$-algebra induced by the norm.  Suppose that the following conditions are satisfied:
\begin{enumerate}[label=(C.\arabic*)]
\item the following series is finite:
\begin{equation}\label{eq:cond_melange_TLCF}
\sum_{k= 1}^\infty \int_0^{\alpha\pr{\sigma\pr{X_i,i\leq 0},\sigma\pr{X_k}}}Q^2_{\norm{h_1\pr{X_0}}_{\Hi}}\pr{u}du<\infty,
\end{equation}
where $h_1\pr{x}=\E{h\pr{x,X_1}}-\E{h\pr{X_1,X'_1}}$ and $X'_1$ is an independent 
copy of $X_1$,
\item \label{assum:mixing_TLFC}$\lim_{n\to\infty}n^2\beta\pr{n}=0$,
\item $\sup_{j\geq 2}\E{\norm{h\pr{X_1,X_j}}_{\Hi}}<\infty$.
\end{enumerate}
Then the following convergence in distribution in $C_{\Hi}[0,1]$ takes place:
\begin{equation}\label{eq:TLCF}
\frac 1{n^{3/2}}\pr{\Uca_{n,h}\pr{t}-\E{\Uca_{n,h}\pr{t}}}\to W_{\Gamma},
\end{equation}
where the operator $\Gamma$ is given by 
\begin{equation}\label{eq:def_operateur_Gamma_TLCF}
\left\langle \Gamma u,v\right\rangle_{\Hi}=\sum_{k\in\Z}\E{\left\langle h_1\pr{X_0},u\right\rangle_{\Hi} \left\langle h_1\pr{X_k},v\right\rangle_{\Hi}}, \quad u,v\in\Hi.
\end{equation}
\end{Theorem}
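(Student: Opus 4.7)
Since $h$ is symmetric, Hoeffding's decomposition \eqref{eq:Hoeffding_sym} combined with the identity $\E{h_{1,0}\pr{X_k}}=0$ (which holds by stationarity) splits, after linear interpolation between the values $U_k\pr{h}$, the centered process $\Uca_{n,h}-\E{\Uca_{n,h}}$ into a \emph{linear} part of order $n$ times the partial-sum process of $h_{1,0}\pr{X_k}$ and a \emph{degenerate} part built from $U_k\pr{h_2}$ together with a boundary correction involving $\sum_{i\leq k}h\pr{X_i,X_{k+1}}$. After division by $n^{3/2}$, the problem reduces to two sub-problems: (i) an invariance principle in $C_{\Hi}[0,1]$ for the partial-sum process $n^{-1/2}\sum_{k=1}^{\ent{nt}}h_{1,0}\pr{X_k}$ with limit $W_\Gamma$, and (ii) the negligibility $n^{-3/2}\max_{2\leq k\leq n}\norm{U_k\pr{h_2}}_\Hi\to 0$ in probability (the boundary correction being handled by (C.3) and $\beta\pr{n}\to 0$).

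For (i), I would invoke a Hilbert-valued invariance principle for stationary $\alpha$-mixing sequences. Condition \eqref{eq:cond_melange_TLCF} is the standard Doukhan-Massart-Rio coupling between the mixing rate and the quantile function of $\norm{h_1\pr{X_0}}_{\Hi}$: it ensures absolute convergence of the series defining $\Gamma$ in \eqref{eq:def_operateur_Gamma_TLCF} (so $\Gamma$ is well-defined and $\Sca\pr{\Hi}$), it gives convergence of finite-dimensional distributions to Gaussian vectors with the prescribed covariances, and it provides the moment control on increments needed for tightness in $C_{\Hi}[0,1]$. The classical truncation together with a Bernstein blocking argument, applied coordinate-wise in a Hilbert basis, delivers the claim.

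Step (ii) is the main difficulty and is where the deviation inequality of Subsection~\ref{subsec:dev_ineg} enters. The plan is to couple, on each block of length $\ell$, the sequence $\pr{X_i}$ with an independent copy at cost $\beta\pr{\ell}$ per block, and thereby bound $\PP\ens{\norm{U_k\pr{h_2}}_\Hi>x}$ by the corresponding tail for a $U$-statistic of i.i.d.\ data, plus a mixing remainder. In the i.i.d.\ case $h_2$ is canonical, so $\norm{U_k\pr{h_2}}_\Hi$ is of order $k=o\pr{k^{3/2}}$; the coupling error is absorbed thanks to $n^2\beta\pr{n}\to 0$ from \ref{assum:mixing_TLFC}, provided $\ell$ is chosen to balance the two contributions. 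A union bound over $2\leq k\leq n$ then upgrades the pointwise estimate to the required uniform bound. The hardest part is precisely here: obtaining a Hilbert-valued tail bound for canonical $U$-statistics sharp enough to survive the union bound at scale $n^{3/2}$, and tuning the block length so that the mixing remainder in the coupling stays negligible — which is exactly what the rate $n^2\beta\pr{n}\to 0$ allows. Combining (i) and (ii) via Slutsky in $C_{\Hi}[0,1]$ gives \eqref{eq:TLCF}.
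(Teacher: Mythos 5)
Your overall architecture — Hoeffding decomposition, a Hilbert-valued invariance principle for the linear part under \eqref{eq:cond_melange_TLCF}, and a coupling-based negligibility argument for the degenerate part — matches the paper's. For step (i) the paper simply cites Theorem~2 of Dedecker--Merlev\`ede \cite{MR2019054}, whose hypothesis is exactly \eqref{eq:cond_melange_TLCF}; your plan to redo it ``coordinate-wise in a Hilbert basis'' is under-specified (tightness in $C_{\Hi}[0,1]$ is not a coordinate-wise statement), but the condition you invoke is the right one and this part is essentially fine.

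The genuine gap is in step (ii), in the sentence ``A union bound over $2\leq k\leq n$ then upgrades the pointwise estimate to the required uniform bound.'' It does not. For a canonical kernel the best pointwise tail you can hope for from a second-moment bound is $\PP\pr{\norm{U_k\pr{h_2}}_{\Hi}>\eps n^{3/2}}\lesssim k^2/\pr{\eps^2 n^3}$, and summing this over $2\leq k\leq n$ gives a quantity of order $\eps^{-2}$, a constant rather than $o\pr{1}$. The paper avoids this entirely: the maximum over $n\leq N$ is kept \emph{inside} the probability from the start. Lemma~\ref{lem:borne_U_stat} rewrites $\max_{2\leq n\leq N}\norm{U_n\pr{h_2}}_{\Hi}$ in terms of maxima of $U$-statistics of blocked vectors $V_{k,u}$, Proposition~\ref{prop:general_deviation_inequality} couples those blocks with independent copies at total cost $4N\beta\pr{q}$, and then the degeneracy of $h_2$ makes the coupled $U$-statistic a (forward and reverse) martingale in its two indices, so Doob's and Pinelis's maximal inequalities give $\E{\pr{M^*_{N,q,i}}^2}\lesssim q^2N^2\E{H^2}$ \emph{for the maximum itself}. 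With $x=\eps N^{3/2}$ and the choice $q=\ent{\eta\sqrt N}$ this yields a bound $C\eps^{-2}\eta^2\E{H^2}+C\eps^{-1}\eta\sup_j\E{\norm{h\pr{X_1,X_j}}_{\Hi}}+4N\beta\pr{\ent{\eta\sqrt N}}$, where the last term vanishes precisely because $n^2\beta\pr{n}\to 0$, and one concludes by letting $\eta\downarrow 0$. So the two missing ideas in your sketch are: (a) a maximal inequality via the martingale structure of the degenerate kernel on the coupled blocks, in place of the union bound; and (b) the specific block length $q\asymp\eta\sqrt N$ with $\eta$ sent to $0$ only after $N\to\infty$, which is what makes assumption \ref{assum:mixing_TLFC} (and not merely $\beta\pr{n}\to 0$) the right hypothesis.
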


Notice that our result also applies when $\E{h\pr{X_1,X'_1}\mid X_1}=\E{h\pr{X_1,X'_1}\mid X'_1}=0$ almost surely, in which case the limiting process $W_\Gamma$ is zero because so is $\Gamma$.  
In this case, the appropriated normalization is $n^{-1}$ instead of $n^{-3/2}$ and 
completely different techniques have to be used, like in \cite{MR2922461}.

Let us compare this result with existing ones in the literature. 
Yoshihara \cite{MR0418179} obtained a similar result for real-valued $U$-statistics, but at the cost of a more restrictive assumption on moments and the decay of mixing coefficients. 
Moreover, we address the Hilbert-valued case.

Dehling and Wendler  \cite{MR2557623} obtained a central limit theorem assuming the existence of a positive $\delta$ such that $\sup_{j\geq 2}\E{\norm{h\pr{X_1,X_j}}_{\Hi}^{2+\delta}}<\infty$ and that for some positive $C,\eta$, the $\beta$-mixing coefficient satisfies $\beta\pr{n}\leq Cn^{-1-2/\delta-\eta}$. When $\delta<2$, our condition is less restrictive but our approach does not give better a weaker condition than \ref{assum:mixing_TLFC} when we assume moments of order higher than two. 
The same authors obtained in \cite{MR3282026} a central limit theorem for Hilbert-valued 
$U$-statistics whose date comes from functionals of an absolutely regular sequence. 
In \cite{MR1851171}, the more general case of functions of absolutely regular processes has been addressed but when restricted to $\beta$-mixing case, the obtained result is not better than ours.

Finally, let us mention that Theorem~\ref{thm:TLFC} may be applied even if there is no positive $\delta$ for which $\E{\norm{h_1\pr{X_0}}_{\Hi}^{2+\delta}}$ is finite. For instance, if we merely have $\E{\norm{h_1\pr{X_0}}_{\Hi}^2\pr{\log\pr{1+\norm{h_1\pr{X_0}}_{\Hi}} }^{\gamma}}<\infty$, then \eqref{eq:cond_melange_TLCF} 
may be satisfied, but at the cost of the existence of constants $C$ and  $a\in \pr{0,1}$ such that $\alpha\pr{\sigma\pr{X_i,i\leq 0},\sigma\pr{X_k}}\leq C a^k $. We refer the reader to \cite{MR2117923}, pages 155-158.

\subsection{Marcinkiewicz strong law of large numbers} \label{subsec:SLLN}

In all the results on the Marcinkiewicz strong law of large numbers, a supplementary moment condition with respect to the i.i.d.\ case will be required. 
For $1<p<2$ and $\delta>0$, we will consider the assumption 
\begin{equation}\label{eq:moment_assumption_p_plus_delta}
\E{\norm{h\pr{X_1,X'_1}}_{\Hi}^{p+\delta}}<\infty,
\end{equation}
where $X'_1$ is an independent copy of $X_1$.
Since the derivation of moment inequalities was done via the use of moment inequalities for martingale, it will be natural to distinguish between the cases where the exponent $p+\delta$ is bigger than $2$ or not. 
An other reasonable assumption is boundedness in $\mathbb L^{1}$ of the summands which compose a $U$-statistic, namely, 
\begin{equation}\label{eq:boundedness_in_L1}
\sup_{j\geq 2}\E{\norm{h\pr{X_1,X_j}}_{\Hi}}<\infty.
\end{equation}

\begin{Theorem}[Law of large numbers, non-degenerated case, $p+\delta\geq 2$]\label{thm:MSLLN_non_deg_finite_var}
Let $\pr{X_i}_{i\in\Z}$ be a strictly stationary sequence of random variables taking values in a separable metric space $\pr{S,d}$ and let $h\colon S\times S\to \Hi$ be a measurable function, where $S\times S$ is endowed with the product $\sigma$-algebra and the Hilbert space $\Hi$ with the $\sigma$-algebra induced by the norm. Let $1<p<2$. 
Suppose that \eqref{eq:moment_assumption_p_plus_delta} and \eqref{eq:boundedness_in_L1} hold with $p+\delta\geq 2$  and  the following condition  is  satisfied:
  there exists a positive $\eta$ such that 
\begin{equation}\label{assump:mixing_linear_part_LGN}
\sum_{k=1}^\infty k^{\gamma\pr{p,\delta,\eta}}<\infty,
\end{equation}
where 
\begin{equation}
\gamma\pr{p,\delta,\eta}=\max\ens{p-1+\eta,p-2+ \frac{p\pr{p-1}}\delta}.
\end{equation}
Then the following convergence takes place:
\begin{equation}
\lim_{n\to\infty}\frac 1{n^{1+1/p}}\norm{U_n\pr{h}}_{\Hi}=0\mbox{ a.s..}
\end{equation}
\end{Theorem}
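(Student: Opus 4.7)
My plan is to apply the Hoeffding decomposition \eqref{eq:Hoeffding} and treat separately the two linear sums and the degenerate part $U_n\pr{h_2}$ under the normalisation $n^{1+1/p}$.

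For the two linear parts, set $Y_i=h_{1,0}\pr{X_i}-\E{h_{1,0}\pr{X_i}}$. Abel summation gives $\sum_{i=1}^{n-1}\pr{n-i}Y_i=\sum_{k=1}^{n-1}S_k$ with $S_k=\sum_{i=1}^k Y_i$, so a Cesaro--Kronecker argument reduces matters to the $\Hi$-valued Marcinkiewicz SLLN
\begin{equation*}
\lim_{n\to\infty}n^{-1/p}\norm{S_n}_{\Hi}=0 \text{ a.s.}
\end{equation*}
for the strictly stationary absolutely regular sequence $\pr{h_{1,0}\pr{X_i}}_{i\geq 1}$. Jensen's inequality transfers the $\el^{p+\delta}$ bound \eqref{eq:moment_assumption_p_plus_delta} to $h_{1,0}\pr{X_1}$, and the mixing hypothesis \eqref{assump:mixing_linear_part_LGN} is more than sufficient to guarantee the Marcinkiewicz rate $n^{1/p}$ via the classical Hilbert-valued SLLN for absolutely regular sequences (see, e.g., \cite{MR2117923}). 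The $h_{0,1}$-term is handled by the same argument after time reversal.

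The heart of the proof is the degenerate part, where one must show
\begin{equation*}
\lim_{n\to\infty}n^{-(1+1/p)}\norm{U_n\pr{h_2}}_{\Hi}=0 \text{ a.s.}
\end{equation*}
The plan is to invoke the deviation inequality announced in Subsection~\ref{subsec:dev_ineg}. By its coupling construction, that inequality dominates $\PP\pr{\norm{U_n\pr{h_2}}_{\Hi}\geq x}$ by a mixing-type term (a sum of $\beta\pr{k}$) plus the tail of a $U$-statistic of i.i.d.\ copies of the data. For this i.i.d.\ piece, the degeneracy of $h_2$ together with the (reversed) martingale structure pointed out right after \eqref{eq:Hoeffding} allows a Burkholder-type inequality in $\Hi$, yielding tails controlled by $\E{\norm{h_2\pr{X_1,X_2}}_{\Hi}^{p+\delta}}$ since $p+\delta\geq 2$. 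Working on dyadic blocks $n\in[2^m,2^{m+1})$ and combining a maximal inequality with Borel--Cantelli then reduces the problem to the summability of
\begin{equation*}
\sum_{m\geq 1}\PP\pr{\max_{n\leq 2^{m+1}}\norm{U_n\pr{h_2}}_{\Hi}\geq \eps 2^{m\pr{1+1/p}}}.
\end{equation*}

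I expect the main obstacle to be the balancing step inside the deviation inequality. Two competing contributions appear: Markov's inequality at exponent $p+\delta$ on the i.i.d.\ residual gives a clean but potentially wasteful bound, while the coupling error is proportional to a sum of $\beta\pr{k}$ against the block length, and one must choose that length optimally. Interpolating between the $\el^{p+\delta}$ and $\el^p$ norms of $h_2$ is exactly what produces the two arguments of $\gamma\pr{p,\delta,\eta}=\max\ens{p-1+\eta,\,p-2+p\pr{p-1}/\delta}$: the first term reflects the cost of the number of blocks (an arbitrarily small loss $\eta$ absorbed by a logarithmic factor when summing) and the second the cost of the interpolation. Verifying that \eqref{assump:mixing_linear_part_LGN} is exactly the threshold that makes the dyadic tails summable, so that Borel--Cantelli concludes, is where the technical work will concentrate.
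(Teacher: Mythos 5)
Your plan follows the same route as the paper: Hoeffding decomposition \eqref{eq:Hoeffding}, reduction of the two linear terms to the Hilbert-valued Marcinkiewicz law $n^{-1/p}\norm{S_n}_{\Hi}\to 0$ a.s., and treatment of the degenerate part by the deviation inequality of Subsection~\ref{subsec:dev_ineg} on dyadic blocks followed by Borel--Cantelli. Two corrections, however, to where you expect the work to lie. First, the linear part is not a soft step: it is exactly there that the exponent $p-2+p\pr{p-1}/\delta$ in $\gamma\pr{p,\delta,\eta}$ is consumed, via Corollary~3 of \cite{MR2256474}, a quantitative Hilbert-valued Marcinkiewicz strong law whose hypothesis couples the decay of $\beta\pr{k}$ with the $\mathbb{L}^{p+\delta}$ moment of $h_{1,0}\pr{X_1}$; a generic appeal to ``the classical SLLN for absolutely regular sequences'' does not by itself deliver the rate $n^{1/p}$, and \cite{MR2117923} does not cover the Hilbert-valued case you need. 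Second, in the degenerate part no truncation and no $\mathbb{L}^{p+\delta}$--$\mathbb{L}^{p}$ interpolation is required when $p+\delta\geq 2$: one applies \eqref{eq:ineg_dev_Ustat_degeneree_moment_ordre_2} with $r=p+\delta$, $N=2^M$, $x=\eps 2^{M\pr{1+1/p}}$ and $q=\ent{2^{Ma}}$ with $a=1/\pr{p+\eta}$; since $a<1/p$ the first two terms of that bound are geometrically summable in $M$, and the only binding constraint is $\sum_M 2^M\beta\pr{\ent{2^{Ma}}}<\infty$, equivalently $\sum_k k^{p-1+\eta}\beta\pr{k}<\infty$, which is the other branch of the maximum in \eqref{assump:mixing_linear_part_LGN} (read, as the proof requires, with the factor $\beta\pr{k}$ inside the sum). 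Thus the two arguments of $\gamma\pr{p,\delta,\eta}$ come from the two different pieces of the Hoeffding decomposition, not from an interpolation inside the degenerate part; the balancing analysis you announce as the main obstacle would be aimed at the wrong term, while the step you treat as routine (the linear part) is the one that actually needs the second exponent.
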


\begin{Theorem}[Law of large numbers, non-degenerated case, $p+\delta< 2$]\label{thm:MSLLN_non_deg_infinite_var}
Let $\pr{X_i}_{i\in\Z}$ be a strictly stationary sequence of random variables taking values in a separable metric space $\pr{S,d}$ and let $h\colon S\times S\to \Hi$ be a measurable function, where $S\times S$ is endowed with the product $\sigma$-algebra and the Hilbert space $\Hi$ with the $\sigma$-algebra induced by the norm. Let $1<p<2$. 
Suppose that \eqref{eq:moment_assumption_p_plus_delta} and \eqref{eq:boundedness_in_L1} hold with $p+\delta< 2$ and   
that 
\begin{equation}\label{eq:cond_melange_non_deg_infinite_var}
\sum_{k=1}^\infty k^{\gamma\pr{p,\delta}}\beta\pr{k}<\infty,
\end{equation}
where 
\begin{equation}
\gamma\pr{p,\delta}=\max\ens{p-2+\frac{p\pr{p-1}}\delta, 
\frac{p\pr{p-1}+\pr{p-1}\delta}{p\pr{p-1}+\pr{p+1}\delta}}
\end{equation}
Then the following convergence takes place:
\begin{equation}\label{eq:MLLN_non_deg}
\lim_{n\to\infty}\frac 1{n^{1+1/p}}\norm{U_n\pr{h}}_{\Hi}=0\mbox{ a.s..}
\end{equation}
\end{Theorem}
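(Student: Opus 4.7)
The plan is to apply the Hoeffding decomposition~\eqref{eq:Hoeffding}, writing
$$U_n\pr{h} = L_n^{(1)} + L_n^{(2)} + U_n\pr{h_2},$$
where $L_n^{(1)} = \sum_{i=1}^{n-1} \pr{n-i} \pr{h_{1,0}\pr{X_i} - \E{h_{1,0}\pr{X_i}}}$ and $L_n^{(2)} = \sum_{j=2}^n \pr{j-1} \pr{h_{0,1}\pr{X_j} - \E{h_{0,1}\pr{X_j}}}$, and to show that each of the three terms, after division by $n^{1+1/p}$, tends to zero almost surely. The three pieces are treated by genuinely different techniques: the two linear terms by a Marcinkiewicz law for stationary $\beta$-mixing sequences, and the degenerate $U$-statistic by the deviation inequality of Subsection~\ref{subsec:dev_ineg}.

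For the linear pieces, $h_{1,0}$ and $h_{0,1}$ are Hilbert-valued maps satisfying $\E{\norm{h_{1,0}\pr{X_1}}_\Hi^{p+\delta}} < \infty$ by Jensen's inequality applied to~\eqref{eq:moment_assumption_p_plus_delta}, and the sequences $\pr{h_{1,0}\pr{X_i}}_{i\in\Z}$ and $\pr{h_{0,1}\pr{X_j}}_{j\in\Z}$ inherit the $\beta$-mixing rate of $\pr{X_i}_{i\in\Z}$. Abel summation rewrites $L_n^{(1)} = \sum_{k=1}^{n-1}\sum_{i=1}^{k}\pr{h_{1,0}\pr{X_i}-\E{h_{1,0}\pr{X_i}}}$, so by the Toeplitz lemma it suffices to prove $k^{-1/p}\norm{\sum_{i=1}^k\pr{h_{1,0}\pr{X_i}-\E{h_{1,0}\pr{X_i}}}}_\Hi \to 0$ almost surely, and symmetrically for $L_n^{(2)}$. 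This is a Marcinkiewicz strong law for Hilbert-valued stationary $\beta$-mixing sequences with $\mathbb L^{p+\delta}$ marginals; the mixing rate it demands is exactly encoded by the second term $\pr{p\pr{p-1}+\pr{p-1}\delta}/\pr{p\pr{p-1}+\pr{p+1}\delta}$ in the maximum defining $\gamma\pr{p,\delta}$.

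The main work is the degenerate piece $U_n\pr{h_2}$. Here I would invoke the deviation inequality of Subsection~\ref{subsec:dev_ineg}: via coupling by independent blocks of length $q$, it bounds $\PP\pr{\norm{U_n\pr{h_2}}_\Hi > x}$ by a mixing contribution involving $n\beta\pr{q}/q$ plus tails of auxiliary $U$-statistics built from i.i.d.\ copies. Cutting $\N$ into dyadic ranges $n \in [2^k,2^{k+1})$ and applying Borel--Cantelli to
$$\sum_{k\geq 0}\PP\pr{\max_{2^k\leq n<2^{k+1}}\norm{U_n\pr{h_2}}_\Hi > \varepsilon\, 2^{k\pr{1+1/p}}}$$
reduces the problem to controlling the i.i.d.\ tails after optimising $q = q_k$. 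Truncating $h_2$ at a level of order $2^{k\pr{1+1/p}}$, combining a Rosenthal-type bound for the truncated degenerate i.i.d.\ $U$-statistic with a direct first-moment bound on the untruncated part via~\eqref{eq:moment_assumption_p_plus_delta}, produces a series whose summability is equivalent to $\sum_k k^{\gamma\pr{p,\delta}}\beta\pr{k}<\infty$, which is~\eqref{eq:cond_melange_non_deg_infinite_var}.

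The hard part is the i.i.d.\ tail of $U_n\pr{h_2}$ when only the moment $p+\delta<2$ is available: the $\mathbb L^2$ martingale route based on the orthogonality identities that follow~\eqref{eq:Hoeffding} in the i.i.d.\ case is unusable, and a naive truncation destroys degeneracy. The delicate step is therefore to re-centre the truncated kernel so as to restore a degenerate $U$-statistic (to which a second-moment Rosenthal-type bound applies), while absorbing the re-centring error into the linear component and a first-moment remainder. Balancing the truncation level, the block length $q_k$, and the resulting moment bounds is what produces the first term $p-2 + p\pr{p-1}/\delta$ in $\gamma\pr{p,\delta}$, and is the step I expect to require the most care.
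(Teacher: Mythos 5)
Your overall architecture coincides with the paper's: Hoeffding decomposition, a Marcinkiewicz strong law for the two linear (projection) terms, and, for $U_n\pr{h_2}$, dyadic blocking plus Borel--Cantelli together with the coupling deviation inequality of Subsection~\ref{subsec:dev_ineg}, truncating the kernel and re-centring the truncated pieces to restore degeneracy. However, the quantitative part of your plan --- which is where the stated exponent $\gamma\pr{p,\delta}$ actually comes from --- contains two concrete errors. First, the attribution of the two terms of $\gamma\pr{p,\delta}$ is swapped. The linear parts are handled exactly as in Theorem~\ref{thm:MSLLN_non_deg_finite_var}, via Corollary~3 of \cite{MR2256474}, and this is what requires $\sum_k k^{p-2+p\pr{p-1}/\delta}\beta\pr{k}<\infty$, i.e.\ the \emph{first} term of the maximum (note it is the same term that appears in $\gamma\pr{p,\delta,\eta}$ in the finite-variance case, where only the treatment of the degenerate part changes). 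The \emph{second} term, $\frac{p\pr{p-1}+\pr{p-1}\delta}{p\pr{p-1}+\pr{p+1}\delta}=\frac 1a-1$, is produced by optimising the block length $q=\ent{2^{Ma}}$ in the degenerate part, not by the linear part.

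Second, your truncation level $R=2^{M\pr{1+1/p}}$ is not the right one and, as written, proves a strictly weaker theorem. The paper takes $R=2^{Mb}$ with $b=\frac{p-1}{p\pr{p+\delta-1}}$, chosen so that the first-moment tail series $\sum_M 2^{M\pr{1-1/p}}\E{H\ind{H>2^{Mb}}}$ converges exactly under $\E{H^{p+\delta}}<\infty$ (via \eqref{eq:inegalite_pour_sum_2Nc} with $1+c/b=p+\delta$); the constraint coming from the truncated second-moment series then permits $a=\frac{p\pr{p-1}+\delta\pr{p+1}}{2p\pr{p+\delta-1}}$. With your choice $b=1+1/p$ the tail series converges comfortably (it only needs a moment of order $2p/\pr{p+1}\leq p$), but the truncated second-moment constraint $2-\pr{2/p-2a}/b\leq p+\delta$ forces $a\leq\frac{p\pr{p-1}+\delta\pr{p+1}}{2p}$, smaller than the paper's value by the factor $p+\delta-1<1$; the resulting mixing exponent $1/a-1$ then exceeds the second term of $\gamma\pr{p,\delta}$ by $\frac{2p\pr{2-p-\delta}}{p\pr{p-1}+\pr{p+1}\delta}>0$. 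So the balancing step you yourself flag as the one ``requiring the most care'' has not been carried out, and as sketched it does not yield condition \eqref{eq:cond_melange_non_deg_infinite_var}.
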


Dehling and Sharipov \cite{MR2571765} obtained also the convergence 
\eqref{eq:MLLN_non_deg}, but with  slightly different assumptions on the $\beta$-mixing 
coefficients, namely, 
\begin{equation}
\sum_{k=1}^\infty k^{\gamma'\pr{p,\delta}}\beta\pr{k}<\infty, 
\end{equation}
where 
\begin{equation}
\gamma'\pr{p,\delta}=\max\ens{p-2+\frac{p\pr{p-1}}\delta, 1}.
\end{equation}
Since $\gamma'\pr{p,\delta}\geq \max\ens{\gamma\pr{p,\delta},\gamma\pr{p,\delta,\eta}}$, our assumption is always equaly or less restrictive. It is also worth pointing out that we do not need symmetry of the kernel. Moreover, we 
can also treat Hilbert space valued kernels. These extensions allows us to consider kernels of the form 
\begin{equation}
h\colon \Hi\times\Hi\to\Hi, \quad h\pr{x,y}=\begin{cases}
\frac{x-y}{\norm{x-y}_{\Hi}}&\mbox{ if }x\neq y,\\
0&\mbox{ if }x=y.,
\end{cases}
\end{equation}
which played an important role in \cite{MR3650400,wegner2023robust}. 

We now continue the presentation of the results in order to address the degenerated case. 
Like in the independent case, the appropriated normalisation is $n^{2/p}$.

\begin{Theorem}[Law of large numbers, degenerated case, $p+\delta= 2$]\label{thm:MSLLN_deg_finite_var}
Let $\pr{X_i}_{i\in\Z}$ be a strictly stationary sequence of random variables taking values in a separable metric space $\pr{S,d}$ and let $h\colon S\times S\to \Hi$ be a   measurable function, where $S\times S$ is endowed with the product $\sigma$-algebra and the Hilbert space $\Hi$ with the $\sigma$-algebra induced by the norm.  
Suppose that 
\begin{equation}
\E{h\pr{X_1,X'_1}\mid X_1}=\E{h\pr{X_1,X'_1}\mid X'_1}=0.
\end{equation}
Let $1<p<2$. 
Assume that \eqref{eq:moment_assumption_p_plus_delta} and \eqref{eq:boundedness_in_L1} hold with $p+\delta\geq 2$ and that there exists some positive $\eta$ such that 
\begin{equation}\label{eq:assum_mix_MSLLN_deg_finite_var}
\sum_{k=1}^\infty k^{\frac{2\pr{p-1}}{2-p}+\eta}\beta\pr{k}<\infty.
\end{equation}
Then the following convergence takes place:
\begin{equation}
\lim_{n\to\infty}\frac 1{n^{2/p}}\norm{U_n\pr{h}}_{\Hi}=0\mbox{ a.s..}
\end{equation}
\end{Theorem}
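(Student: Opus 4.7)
The plan is to follow the architecture of Theorem~\ref{thm:MSLLN_non_deg_finite_var} with the crucial simplification that in the degenerate setting the Hoeffding projections $h_{1,0}$ and $h_{0,1}$ vanish, killing the linear contribution and improving the normalisation from $n^{1+1/p}$ to $n^{2/p}$. The workhorse is the deviation inequality of Subsection~\ref{subsec:dev_ineg}, which controls $\PP\pr{\norm{U_n\pr{h}}_{\Hi}>x}$ by a $\beta$-mixing penalty plus the tails of $U$-statistics of i.i.d.\ data. I would first pass to the dyadic subsequence $n=2^k$: a maximal inequality on $\max_{2^k\leq n<2^{k+1}}\norm{U_n\pr{h}}_{\Hi}$ derived by applying the deviation inequality to the increments $U_n\pr{h}-U_{2^k}\pr{h}$ reduces the problem, via Borel--Cantelli, to establishing $\sum_{k\geq 1}\PP\pr{\norm{U_{2^k}\pr{h}}_{\Hi}>\eps 2^{2k/p}}<\infty$ for every $\eps>0$.

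Next I would truncate: set $h'\pr{x,y}=h\pr{x,y}\ind{\norm{h\pr{x,y}}_{\Hi}\leq M}$ and $h''=h-h'$, with $M=M_k$ to be tuned. The truncated kernel $h'$ is no longer degenerate, so I would apply the Hoeffding decomposition \eqref{eq:Hoeffding} to it. The \emph{original} degeneracy forces the linear projection of $h'$ to be a tail integral,
\begin{equation*}
\E{h'\pr{x,X'_1}}=-\E{h\pr{x,X'_1}\ind{\norm{h\pr{x,X'_1}}_{\Hi}>M}},
\end{equation*}
whose $\Hi$-norm is at most $M^{1-\pr{p+\delta}}\E{\norm{h\pr{x,X'_1}}_{\Hi}^{p+\delta}}$. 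For a suitable $M_k$ the linear and constant parts of $h'$ contribute negligibly on the scale $n^{2/p}$, and the residual $U_n\pr{h''}$ is controlled by a first-moment bound, combining \eqref{eq:boundedness_in_L1} with a coupling of each pair $\pr{X_i,X_j}$ with an independent pair at the cost of $\beta\pr{\abs{i-j}}$.

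The dominant object is then $U_n\pr{\tilde h_2}$, where $\tilde h_2$ is the degenerate, bounded part of $h'$. The deviation inequality of Subsection~\ref{subsec:dev_ineg} reduces its tail to that of $U_n^{\mathrm{iid}}\pr{\tilde h_2}$ for an i.i.d.\ sequence sharing the common marginal. Using degeneracy and the Hilbert-space orthogonality of canonical $U$-statistics,
\begin{equation*}
\E{\norm{U_n^{\mathrm{iid}}\pr{\tilde h_2}}_{\Hi}^2}=\binom n2\E{\norm{\tilde h_2\pr{X_1,X'_1}}_{\Hi}^2}\leq Cn^2,
\end{equation*}
so Markov gives a tail of order $n^{2-4/p}$, which is summable along $n=2^k$ because $p<2$. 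The $\beta$-mixing penalty produced by the deviation inequality has the form $\pr{n/\ell}\beta\pr{\ell}$ where $\ell$ is the coupling block length; balancing $\ell$ against the required tail scale $2^{2k/p}$ leads precisely to the exponent $2\pr{p-1}/\pr{2-p}+\eta$ appearing in \eqref{eq:assum_mix_MSLLN_deg_finite_var}.

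The main difficulty will be choosing the truncation level $M_k$ and the block length $\ell_k$ simultaneously so that the four sources of error---(i) the linear projection of $h'$, (ii) the tail contribution $U_n\pr{h''}$, (iii) the $\el^2$ tail of $U_n^{\mathrm{iid}}\pr{\tilde h_2}$, and (iv) the $\beta$-mixing penalty from the deviation inequality---are simultaneously summable along the dyadic subsequence. The degeneracy of $h$ enters at two places: it realises the linear part of $h'$ as a tail of $h$, and it provides the Hilbert-orthogonality underlying the $\el^2$ bound. Once these balances are tuned, what remains is bookkeeping driven by the exponents in \eqref{eq:moment_assumption_p_plus_delta} and \eqref{eq:assum_mix_MSLLN_deg_finite_var}.
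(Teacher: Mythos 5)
Your overall architecture (dyadic blocking, Borel--Cantelli, reduction via the coupling/deviation inequality to i.i.d.-type $U$-statistics plus a mixing penalty) is the paper's, and your side remarks are mostly dispensable rather than wrong: since $p+\delta\geq 2$ here, $H=\norm{h\pr{X_1,X'_1}}_{\Hi}$ already has a finite second moment, so no truncation is needed at all --- the paper simply invokes \eqref{eq:ineg_dev_Ustat_degeneree_moment_ordre_2} with $r=2$ (equivalently Proposition~\ref{prop:ineg_dev_Ustat_degeneree} with $R\to\infty$), and the maximum over $2\leq n\leq 2^M$ is already built into that inequality, so no separate maximal inequality for increments $U_n\pr{h}-U_{2^k}\pr{h}$ (which are not initial-segment $U$-statistics) is required.

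The genuine gap is in your quantitative accounting of the deviation inequality, which is exactly where the exponent $\frac{2\pr{p-1}}{2-p}$ must come from. First, the i.i.d.\ $U$-statistic term is \emph{not} of order $x^{-2}N^2\E{H^2}$: the blocking produces $O\pr{q^2}$ residue pairs $\pr{\ell,\ell'}$, each contributing a $U$-statistic over $N/q$ blocks, and after the triangle inequality this yields $C_2x^{-2}q^2N^2\E{H^2}$. With $x=\eps 2^{2M/p}$, $N=2^M$, $q=\ent{2^{Ma}}$ this is $2^{M\pr{2+2a-4/p}}$, so summability forces $a<\frac{2}{p}-1=\frac{2-p}{p}$; your $\ell$-independent bound $n^{2-4/p}$ would let $q$ grow arbitrarily and would not constrain the block length. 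Second, the coupling penalty is $4N\beta\pr{q}$ (one failure event per block, about $N$ of them), not $\pr{N/q}\beta\pr{q}$. Combining the two: $\sum_M 2^M\beta\pr{\ent{2^{Ma}}}<\infty$ is equivalent to $\sum_k k^{1/a-1}\beta\pr{k}<\infty$, and the constraint $1/a>\frac{p}{2-p}$ gives the exponent $\frac{p}{2-p}-1+\eta=\frac{2\pr{p-1}}{2-p}+\eta$ of \eqref{eq:assum_mix_MSLLN_deg_finite_var}. With the error terms as you stated them, the balance would produce a strictly weaker (hence incorrect, relative to what the proof can actually deliver) mixing exponent, so your assertion that your bookkeeping ``leads precisely'' to \eqref{eq:assum_mix_MSLLN_deg_finite_var} does not follow from what you wrote; you need the $q^2$ factor and the $N\beta\pr{q}$ penalty to recover it.
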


Note that in the previous result, we only assume the existence of moments of order two. In our approach, moments of higher order will not help to find a weaker condition on the decay of the $\beta$-mixing coefficients. 

\begin{Theorem}[Law of large numbers, degenerated case, $p+\delta< 2$]\label{thm:MSLLN_deg_infinite_var}
Let $\pr{X_i}_{i\in\Z}$ be a strictly stationary sequence of random variables taking values in a separable metric space $\pr{S,d}$ and let $h\colon S\times S\to \Hi$ be a   measurable function, where $S\times S$ is endowed with the product $\sigma$-algebra and the Hilbert space $\Hi$ with the $\sigma$-algebra induced by the norm. Suppose that 
\begin{equation}
\E{h\pr{X_1,X'_1}\mid X_1}=\E{h\pr{X_1,X'_1}\mid X'_1}=0.
\end{equation} Let $1<p<2$. 
Assume that \eqref{eq:moment_assumption_p_plus_delta} and \eqref{eq:boundedness_in_L1} hold with $p+\delta< 2$ and  
\begin{equation}\label{eq:assum_mix_MSLLN_deg_infinite_var}
\sum_{k=1}^\infty k^{ p-1+\frac{p\pr{p-1}}{\delta}}\beta\pr{k}<\infty.
\end{equation}
Then the following convergence takes place:
\begin{equation}
\lim_{n\to\infty}\frac 1{n^{2/p}}\norm{U_n\pr{h}}_{\Hi}=0\mbox{ a.s..}
\end{equation}
\end{Theorem}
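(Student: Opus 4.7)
The plan is to derive Theorem~\ref{thm:MSLLN_deg_infinite_var} from the deviation inequality announced in Subsection~\ref{subsec:dev_ineg}, following the same skeleton as for Theorem~\ref{thm:MSLLN_deg_finite_var} but with a different calibration of truncation level and block length, since the range $p+\delta<2$ prevents any recourse to second moments. Under the degeneracy assumption, the conditional means $h_{1,0}$ and $h_{0,1}$ defined in \eqref{eq:def_h10} and \eqref{eq:def_h01} are almost surely zero and $\E{h\pr{X_1,X'_1}}=0$, so the Hoeffding decomposition \eqref{eq:Hoeffding} reduces to $U_n\pr{h}=U_n\pr{h_2}$ with $h_2=h$: it therefore suffices to analyse a completely degenerate $U$-statistic. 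Further, by Borel--Cantelli applied along the dyadic subsequence $n_k=2^k$, the conclusion $n^{-2/p}\norm{U_n\pr{h}}_{\Hi}\to 0$ almost surely will follow from $\sum_{k\geq 1}\PP\pr{\norm{U_{n_k}\pr{h}}_{\Hi}>\eps n_k^{2/p}}<\infty$ for every $\eps>0$, together with a block-maximum argument controlling $\max_{n_k\leq n<n_{k+1}}\norm{U_n\pr{h}-U_{n_k}\pr{h}}_{\Hi}$ via the same deviation inequality.

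Setting $x=\eps n^{2/p}$, the deviation inequality of Subsection~\ref{subsec:dev_ineg} bounds $\PP\pr{\norm{U_n\pr{h}}_{\Hi}>x}$ by the sum of a truncation remainder $Cn^2\PP\pr{\norm{h\pr{X_1,X'_1}}_{\Hi}>M}$, a coupling/mixing contribution $C\pr{n/q}\beta\pr{q}$, and i.i.d.\ $U$-statistic tail terms of the form $\PP\pr{\norm{\widetilde U_n\pr{h_M}}_{\Hi}>c\eps n^{2/p}}$, where $h_M=h\ind{\norm{h}_{\Hi}\leq M}$ and $\widetilde U_n$ denotes the $U$-statistic built from i.i.d.\ copies of the marginal law. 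Here $M=M\pr{n}$ and $q=q\pr{n}$ are chosen as powers of $n$ to be optimised. The i.i.d.\ tail is estimated through a Rosenthal-type moment bound for degenerate Hilbert-valued $U$-statistics at exponent $p+\delta\in\pr{1,2}$, which gives a bound of the order of $n^{p+\delta}\E{\norm{h_M}_{\Hi}^{p+\delta}}$ for the $\pr{p+\delta}$-th moment of $\norm{\widetilde U_n\pr{h_M}}_{\Hi}$, and then Markov's inequality.

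Summability along $n_k=2^k$ of the truncation remainder forces $M\pr{n}$ to grow like a suitable positive power of $n$, while the i.i.d.\ tail contribution prevents $M\pr{n}$ from being too large relative to $n^{2/p}$; the coupling term requires $q\pr{n}$ to be chosen so that $\sum_{k\geq 1}\pr{n_k/q\pr{n_k}}\beta\pr{q\pr{n_k}}<\infty$. Simultaneously optimising $M$ and $q$ against the moment assumption \eqref{eq:moment_assumption_p_plus_delta} produces, after Abel summation, precisely the mixing condition \eqref{eq:assum_mix_MSLLN_deg_infinite_var}, namely $\sum_{k\geq 1}k^{p-1+p\pr{p-1}/\delta}\beta\pr{k}<\infty$. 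Borel--Cantelli combined with the block-maximum step then concludes the proof.

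The main obstacle I anticipate is the calibration: the exact exponent $p-1+p\pr{p-1}/\delta$ only emerges through a careful joint optimisation of the truncation level and block length, and the absence of finite variance rules out the martingale-type argument used for Theorem~\ref{thm:MSLLN_deg_finite_var}. One must therefore rely on Rosenthal-type moment bounds for degenerate Hilbert-valued $U$-statistics with exponent strictly less than $2$, which are significantly more delicate to handle than the second-moment bounds available in the $p+\delta\geq 2$ regime; verifying that the decoupling/symmetrisation steps behind such bounds go through in the Hilbert-space setting without losing sharpness on the exponents is the technical heart of the argument.
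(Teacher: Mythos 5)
Your skeleton coincides with the paper's: under degeneracy $U_n\pr{h}=U_n\pr{h_2}$, the claim reduces to the dyadic Borel--Cantelli estimate \eqref{eq:deg_BC} (the deviation inequality is already a maximal inequality over $2\leq n\leq N$, so no separate block-maximum step is needed), and one applies Proposition~\ref{prop:ineg_dev_Ustat_degeneree} with $N=2^M$, $x=\eps 2^{2M/p}$, $q=\ent{2^{Ma}}$, $R=2^{Mb}$ and optimises $a,b$; the paper's choice $a=\delta/\pr{p\pr{p+\delta-1}}$ turns $\sum_M 2^M\beta\pr{\ent{2^{Ma}}}<\infty$ into exactly $\sum_k k^{p-1+p\pr{p-1}/\delta}\beta\pr{k}<\infty$. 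The problem is that the quantitative core of your argument does not work. The claimed Rosenthal-type bound of order $n^{p+\delta}\E{\norm{h_M}_{\Hi}^{p+\delta}}$ for the $\pr{p+\delta}$-th moment of a degenerate Hilbert-valued $U$-statistic of i.i.d.\ data is false: the von Bahr--Esseen/Pinelis inequality at exponent $s\in(1,2]$ is linear in the number of martingale increments at each of the two applications (in $j$, then reversed in $i$), which yields $C_s\,n^{2}\E{\norm{h}_{\Hi}^{s}}$, not $n^{s}$. Your bound would make the theorem essentially free of hypotheses --- Markov at $x=\eps n^{2/p}$ would give $n^{\pr{p+\delta}\pr{1-2/p}}$, summable with no truncation and no constraint on $q$ --- which should have been a warning sign. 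Moreover, in the actual decomposition the moment term also carries the factor from the $O\pr{q^2}$ offset pairs $\pr{\ell,\ell'}$ (whence $q^{r}N^{r}$ in \eqref{eq:ineg_dev_Ustat_degeneree}); omitting it removes precisely the mechanism by which $q$, and hence the mixing rate, enters the calibration.

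Two further points. First, coupling a truncation at level $M\pr{n}$ with a $\pr{p+\delta}$-th moment bound on the truncated part is incoherent: $\E{\norm{h_M}_{\Hi}^{p+\delta}}\leq\E{\norm{h}_{\Hi}^{p+\delta}}$, so the truncation buys nothing there. The paper truncates in order to apply the \emph{second}-moment martingale bound ($r=2$) to $h\ind{\norm{h}_{\Hi}\leq R}$, using $\E{H^2\ind{H\leq R}}\leq R^{2-p-\delta}\E{H^{p+\delta}}$, and a first-moment bound $\E{H\ind{H>R}}\leq R^{1-p-\delta}\E{H^{p+\delta}}$ on the tail, the two being balanced via \eqref{eq:inegalite_pour_sum_2-Nc}--\eqref{eq:inegalite_pour_sum_2Nc}; so your remark that the finite-variance martingale argument is unavailable in this regime is exactly backwards --- it is the tool actually used, after truncation. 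Second, the coupling cost is $N\beta\pr{q}$, not $\pr{N/q}\beta\pr{q}$: roughly $2q\cdot N/\pr{2q}=N$ blocks are coupled over all offsets. With your count the optimisation at $a=\delta/\pr{p\pr{p+\delta-1}}$ would output the exponent $p-2+p\pr{p-1}/\delta$ instead of $p-1+p\pr{p-1}/\delta$, so the assertion that your calibration ``produces precisely'' \eqref{eq:assum_mix_MSLLN_deg_infinite_var} is not substantiated and, as written, cannot be.
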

These two theorems complement the ones obtained in \cite{MR2571765}, where 
the degenerated case was considered, but only in the case of a bounded real valued kernel.  

\section{Proofs}\label{sec:proof}

\subsection{A general deviation inequality}\label{subsec:dev_ineg}

In this subsection, we give a bound for the maximum of a $U$-statistic of strictly stationary data in terms of $U$-statistics 
of i.i.d.\ data and partial sums of sequences of random variables.

\begin{Proposition}\label{prop:ineg_dev_Ustat_degeneree}
Let  $N>2q\geq 1$ be integers, let $R,x>0$, let $\pr{X_i}_{i\geq 1}$ be a strictly stationary sequence of random variables taking values in a separable metric space $\pr{S,d}$ and let  $h\colon S^2\to\Hi$ be a measurable function, where $\pr{\Hi,\scal{}{}}$ is a separable Hilbert space,
such that 
\begin{equation}\label{eq:noyau_degenere}
\E{h\pr{X_1,X'_1}\mid X_1}=\E{h\pr{X_1,X'_1}\mid X'_1}=0,
\end{equation}
  where $X'_1$ be an independent copy of $X_1$. 
Define $H:=  \norm{h\pr{X_1,X'_1}}_{\Hi}$.
  For each $r\geq 2$, the following inequality takes place
 \begin{multline}\label{eq:ineg_dev_Ustat_degeneree}
 \PP\pr{\max_{2\leq n\leq N}\norm{\sum_{1\leq i<j\leq n}h\pr{X_i,X_j}}_{\Hi}>x}\leq C_r x^{-r}q^{r}N^r\E{H^r\ind{H\leq R}}\\
 + C_rx^{-1}N^2 \E{H\ind{H>R}}  
 +C_r x^{-1}qN\sup_{j\geq 2}\E{\norm{h\pr{X_1,X_j}}_{\Hi}}+4N\beta\pr{q},
 \end{multline}
 where the constant $C_r$ depends only on $r$.
\end{Proposition}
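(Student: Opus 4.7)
My strategy combines a Berbee-type coupling with a Hoeffding decomposition of the truncated kernel and a Burkholder--Rosenthal moment bound for a degenerate Hilbert-valued $U$-statistic of independent blocks. I would first apply Berbee's coupling lemma iteratively, using an alternating block scheme with blocks of length $q$ separated by gaps of length $q$ so as to exploit $\beta(q)$ rather than $\beta(1)$ between consecutive blocks, to construct a sequence $\pr{\widetilde X_i}_{i\leq N}$ whose $M\sim N/q$ blocks are mutually independent with the same block-wise distributions as $\pr{X_i}$, and whose total mismatch probability with $\pr{X_i}$ is at most $4N\beta(q)$. On the coincidence event the two $U$-statistics agree, so the $4N\beta(q)$ term of the bound is produced directly and it remains to estimate the deviation probability of $\max_{2\leq n\leq N}\norm{U_n(\widetilde X)}_{\Hi}$.

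Next I would write $U_n(\widetilde X)=W_n^{\mathrm{in}}+W_n^{\mathrm{cr}}$ according to whether the two indices of a pair lie in a common block or in different blocks. There are at most $qN$ within-block pairs, and by construction their joint distributions match those of the original close pairs $\pr{X_i,X_{i+k}}$ for $0<k<q$; a triangle inequality followed by Markov at order~$1$ then produces the $C_r x^{-1} qN\sup_{j\geq 2}\E{\norm{h(X_1,X_j)}_{\Hi}}$ contribution. The cross-block piece is split via truncation at level $R$: writing $h=h\ind{\norm{h}_{\Hi}\leq R}+h\ind{\norm{h}_{\Hi}>R}$, the tail part is handled by Markov at order~$1$ using that cross-block pairs of $\widetilde X$ are genuinely independent (hence distributionally equal to $\pr{X_1,X'_1}$), which yields the bound $C_r x^{-1} N^2\E{H\ind{H>R}}$.

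For the bounded part $h\ind{\norm{h}_{\Hi}\leq R}$, truncation destroys degeneracy, so I would restore it via the Hoeffding decomposition
\begin{equation*}
h\ind{\norm{h}_{\Hi}\leq R}(x,y)=c^R+\pr{\mu^R_1(x)-c^R}+\pr{\mu^R_2(y)-c^R}+\bar h^R(x,y)
\end{equation*}
with $\bar h^R$ degenerate. Since $h$ itself is degenerate, the identity $\mu^R_1(x)=-\E{h(x,X'_1)\ind{\norm{h(x,X'_1)}_{\Hi}>R}\mid x}$ bounds $\E{\norm{\mu^R_1(X_1)}_{\Hi}}$ by $\E{H\ind{H>R}}$, so the linear and constant pieces of the cross-block $U$-statistic are absorbed into the tail term via Markov at order~$1$. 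A second use of Jensen on the representation $\mu^R_1(X_1)=\E{h\ind{\norm{h}_{\Hi}\leq R}\mid X_1}$ then gives $\E{\norm{\mu^R_1(X_1)}_{\Hi}^r}\leq \E{H^r\ind{H\leq R}}$, and hence $\E{\norm{\bar h^R(X_1,X'_1)}_{\Hi}^r}\leq C_r \E{H^r\ind{H\leq R}}$. Setting $\bar f^R(B_{l'},B_l)=\sum_{i\in B_{l'}, j\in B_l}\bar h^R(\widetilde X_i,\widetilde X_j)$, the degeneracy of $\bar h^R$ makes $d_l=\sum_{l'<l}\bar f^R(B_{l'},B_l)$ into a Hilbert-valued martingale difference sequence with respect to the block filtration, so I would combine Doob's maximal inequality with the Hilbert-valued Burkholder--Rosenthal inequality to reduce the problem to the estimate of $\E{\norm{\bar f^R(B_1,B_2)}_{\Hi}^r}$.

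The main technical obstacle is this last estimate: the variables $(\widetilde X_i)_{i\in B_l}$ within a single block are not independent---their joint law is that of $(X_1,\dots,X_q)$---so a Rosenthal-type bound on the inner sums is unavailable. I would bypass this by applying the $\ell^r$-triangle inequality twice (once over $j\in B_2$ after conditioning on $B_1$, once over $i\in B_1$), which costs a factor $q^{2(r-1)}\cdot q^2=q^{2r}$ and yields $\E{\norm{\bar f^R}_{\Hi}^r}\leq C_r q^{2r}\E{H^r\ind{H\leq R}}$. Together with the Burkholder--Rosenthal bound $\E{\max_{2\leq l\leq M}\norm{d_2+\dots+d_l}_{\Hi}^r}\leq C_r M^r \E{\norm{\bar f^R}_{\Hi}^r}$ (the quadratic-variation term being dominated via Jensen thanks to $r\geq 2$) and $M\leq N/q$, one obtains $C_r q^rN^r\E{H^r\ind{H\leq R}}$; Markov's inequality at order $r$ then yields the first term of the bound. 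A secondary technical point is that the maximum must be taken over all $n\leq N$ and not only over block boundaries, which I would handle by incorporating the contribution of the incomplete final block into the within-block estimate.
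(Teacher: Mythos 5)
The crux of your argument is its very first step, and as stated it fails. You claim that Berbee's lemma yields a full coupled sequence $\pr{\widetilde X_i}_{i\leq N}$ whose consecutive length-$q$ blocks --- covering all of $\ens{1,\dots,N}$, since you count $\sim qN$ within-block pairs and assert that on the coincidence event the two $U$-statistics agree --- are mutually independent, with total mismatch probability at most $4N\beta\pr{q}$. No such coupling exists in general: any coupling of $\pr{X_i}_{i\leq N}$ with a sequence whose \emph{adjacent} blocks are independent has mismatch probability at least the total variation distance between the joint law of two adjacent blocks and the product of their marginals, which is of order $\beta\pr{\sigma\pr{B_j},\sigma\pr{B_{j+1}}}$, i.e.\ controlled only by $\beta\pr{1}$ and not by $\beta\pr{q}$ (think of a fast-mixing sequence with strong one-step dependence). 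Gaps of length $q$ are exactly what buy the rate $\beta\pr{q}$, but then the coupled blocks cover only half of the indices, and the indices lying in the gaps reappear in the $U$-statistic. If, to repair this, you couple the odd-position and the even-position blocks as two separate families (each internally independent), then blocks from different families are \emph{not} independent under the coupling, so your two key claims downstream --- that cross-block pairs of $\widetilde X$ are distributed as $\pr{X_1,X'_1}$, and that $d_l=\sum_{l'<l}\bar f^R\pr{B_{l'},B_l}$ is a martingale difference array for the block filtration --- both fail for the pairs straddling the two families, and those pairs are of the same order $N^2$ as the ones you do control.

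This missing reduction is precisely what the paper's Lemma~\ref{lem:borne_U_stat} supplies: the pairs $\pr{i,j}$ are sorted according to the residues of $i$ and $j$ modulo $2q$, producing for each of the $\sim\pr{2q}^2$ residue pairs a $U$-statistic over blocks $V_{k,u}$ taken from a \emph{single} family of length-$q$ blocks spaced $2q$ apart (hence with gaps of length $q$), at the price of telescoping corrections and diagonal terms $R_{N,q,i}$, which are handled by Markov's inequality at order one and give the $x^{-1}qN\sup_{j\geq 2}\E{\norm{h\pr{X_1,X_j}}_{\Hi}}$ term; Berbee's coupling is then applied separately within each family, and only then do the cross-block pairs become genuine i.i.d.\ pairs. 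Your subsequent steps --- truncation at level $R$, restoring degeneracy through the Hoeffding decomposition of the truncated kernel, the order-one Markov bound $x^{-1}N^2\E{H\ind{H>R}}$ for the unbounded part, and Doob plus a Hilbert-valued Burkholder--Rosenthal (Pinelis) inequality in both indices giving $x^{-r}q^rN^r\E{H^r\ind{H\leq R}}$ --- do parallel the paper's proof and would go through once the reduction to independent spaced blocks is carried out correctly, but without an argument of that type the deviation inequality is not established.
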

Note that when $H$ has a finite moment of order $r$, one can simply let $R$ going to infinity, which gives the simpler inequality 
 \begin{multline}\label{eq:ineg_dev_Ustat_degeneree_moment_ordre_2}
 \PP\pr{\max_{2\leq n\leq N}\norm{\sum_{1\leq i<j\leq n}h\pr{X_i,X_j}}_{\Hi}>x}\leq C_rx^{-r}q^{r}N^r\E{H^r }\\
 +C_r x^{-1}qN\sup_{j\geq 2}\E{\norm{h\pr{X_1,X_j}}_{\Hi}}+4N\beta\pr{q}.
 \end{multline}
The contribution of  each term is either increasing or decreasing in $q$ hence $q$ has to be chosen in a judicious way.

We start the proof of Proposition~\ref{prop:general_deviation_inequality} 
by a Lemma which gives a bound of the maximum of a $U$-statistics in terms of 
several $U$-statistics whose data can be expressed in terms of blocks 
of  vectors of elements of $\pr{X_i}_{i\geq 1}$ having a gap of size $q$. 
\begin{Lemma}\label{lem:borne_U_stat}
Let $N>2q\geq 1$ be integers. Let $\pr{X_i}_{i\geq 1}$ be a   sequence of random variables taking values in a separable metric space $\pr{S,d}$ and let $h\colon S^2\to\Hi$ be a measurable function, where $\pr{\Hi,\scal{}{}}$ is a separable Hilbert space. Define the $S^{q-1}$-valued vector $V_{k,u}$ by 
\begin{equation}\label{eq:def_Vku}
V_{k,u}=\pr{X_{2qu+k},\dots,X_{2qu+k+q-1}}.
\end{equation}
 Then the following inequality holds:
\begin{equation}\label{eq:bound_Ustat}
\max_{2\leq n\leq N}\norm{\sum_{1\leq i<j\leq n}h\pr{X_i,X_j}}_{\Hi}
\leq \sum_{i=1}^4 M_{N,q,i}+\sum_{i=1}^5R_{N,q,i},
\end{equation}
where 
\begin{align}\label{eq:def_MNq}
M_{N,q,1}&=\sum_{\substack{\ell,\ell'=1\\ 0\leq \ell-\ell'\leq q-1}}^{2q}
\max_{1\leq m\leq \ent{\frac{N}{2q}}}
\norm{\sum_{0\leq u<v\leq m}h_{\ell,\ell'}\pr{V_{\ell',u},V_{\ell',v}   }}_{\Hi},\\
M_{N,q,2}&= \sum_{\substack{\ell,\ell'=1\\ 0\leq \ell'-\ell\leq q-1}}^{2q}
\max_{1\leq m\leq \ent{\frac{N}{2q}}}
\norm{\sum_{0\leq u<v\leq m}h_{\ell,\ell'}\pr{V_{\ell,u},V_{\ell,v}   }}_{\Hi},\\
M_{N,q,3}&= \sum_{\substack{\ell,\ell'=1\\ q\leq \ell-\ell'\leq 2q-1}}^{2q}
\max_{1\leq m\leq \ent{\frac{N}{2q}}}
\norm{\sum_{0\leq u<v\leq m}h_{\ell,\ell'}\pr{V_{\ell-2q,u},V_{\ell-2q,v}   }}_{\Hi},\\
M_{N,q,4}&= \sum_{\substack{\ell,\ell'=1\\ q\leq \ell'-\ell\leq 2q-1}}^{2q}
\max_{1\leq m\leq \ent{\frac{N}{2q}}}
\norm{\sum_{0\leq u<v\leq m}h_{\ell,\ell'}\pr{V_{\ell',u},V_{\ell',v}   }}_{\Hi},
\end{align}
the functions $h_{\ell,\ell'}\colon S^{q}\times S^{q}\to\Hi$ are defined by 
\begin{equation}\label{eq:def_h_ell_ell'}
h_{\ell,\ell'}\pr{\pr{x_a}_{a=1}^q,\pr{y_b}_{b=1}^q }
= \begin{cases}
h\pr{x_{\ell-\ell'+1},y_{ 1}}&\mbox{if }0\leq \ell-\ell'\leq q-1,\\
h\pr{x_{1},y_{\ell'+1}}&\mbox{if }0\leq \ell'-\ell\leq q-1,\\
h\pr{x_{1},y_{\ell'-\ell+2q+1}}&\mbox{if }q\leq \ell-\ell'\leq 2q-1,
\\
h\pr{x_{\ell'-\ell+2q+1},y_{1}}&\mbox{if }q\leq \ell'-\ell\leq 2q-1,\\
\end{cases}
\end{equation}
\begin{align}
R_{N,q,1}&=\sum_{\substack{\ell,\ell'=1\\ q\leq \ell-\ell'\leq 2q-1}}^{2q}\max_{1\leq m\leq \ent{\frac{N}{2q}}}
\norm{\sum_{v=1}^m h\pr{X_{2\pr{v-1}q+\ell},X_{2vq+\ell'}} }_{\Hi},\label{eq:def_RNq1}\\
R_{N,q,2}&=\sum_{\substack{\ell,\ell'=1\\ q\leq \ell-\ell'\leq 2q-1}}^{2q}\max_{1\leq m\leq \ent{\frac{N}{2q}}}
\norm{\sum_{v=1}^m h\pr{X_{ \ell},X_{2vq+\ell'}} }_{\Hi},
\label{eq:def_RNq2}\\
R_{N,q,3}&=\sum_{\substack{\ell,\ell'=1\\ q\leq \ell'-\ell\leq 2q-1}}^{2q}\max_{1\leq m\leq \ent{\frac{N}{2q}}}\norm{\sum_{v=1}^m h\pr{X_{ \ell},X_{2qv+\ell'}} }_{\Hi},\label{eq:def_RNq3}\\
R_{N,q,4}&=\sum_{\substack{\ell,\ell'=1\\ q\leq \ell'-\ell\leq 2q-1}}^{2q}\max_{1\leq m\leq \ent{\frac{N}{2q}}}
\norm{\sum_{v=0}^m h\pr{X_{ 2qv+\ell},X_{2qv+\ell'}} }_{\Hi}\label{eq:def_RNq4}
,\\
R_{N,q,5}&=\sum_{1\leq \ell<\ell'\leq 2q}
\max_{2\leq m\leq\ent{\frac{N}{2q}}}\norm{\sum_{u=0}^m 
h\pr{X_{2qu+\ell,2qv+\ell'}}
}_{\Hi}\label{eq:def_RNq5}.
\end{align}
\end{Lemma}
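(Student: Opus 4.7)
The plan is to decompose the pairs $(i,j)$ with $1\leq i<j\leq n$ according to the residues modulo $2q$: I write $i=2qu+\ell$ uniquely with $u\geq 0$ and $\ell\in\ens{1,\dots,2q}$, and similarly $j=2qv+\ell'$. The sum $\sum_{1\leq i<j\leq n}h\pr{X_i,X_j}$ then splits into $\pr{2q}^2$ sub-sums indexed by $(\ell,\ell')$, each being a sum over block indices $(u,v)$ subject to the ordering constraint $i<j$. This block structure is designed so that the length-$q$ vectors $V_{k,u}$ can play the role of mixing-friendly ``super-variables'' in later arguments.

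For each $(\ell,\ell')$ I split the sum into the diagonal part $u=v$ and the off-diagonal part $u<v$. The diagonal contributions, coming from pairs with $\ell<\ell'$, collect into the single sums $R_{N,q,4}$ and $R_{N,q,5}$. For the off-diagonal part I seek to rewrite each sub-sum as a $U$-statistic $\sum_{0\leq u<v\leq m}h_{\ell,\ell'}\pr{V_{k,u},V_{k,v}}$ for a well-chosen shift $k$. When $\abs{\ell-\ell'}\leq q-1$, the shift $k=\min\pr{\ell,\ell'}$ works: both $X_{2qu+\ell}$ and $X_{2qv+\ell'}$ lie inside $V_{k,u}$ and $V_{k,v}$ at coordinates determined by $\ell-\ell'$, and the tail of $V_{k,u}$ is separated from the head of $V_{k,v}$ by exactly $q$ indices. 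These contributions yield $M_{N,q,1}$ and $M_{N,q,2}$ according to the sign of $\ell-\ell'$.

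The delicate case is $q\leq\abs{\ell-\ell'}\leq 2q-1$, where no common shift $k\in\ens{1,\dots,2q}$ embeds both coordinates inside a single block. The trick is to use the ``shifted'' block labelling $V_{\ell-2q,u}$ (or $V_{\ell'-2q,u}$), which amounts to reading the vector as a window straddling two adjacent $2q$-blocks of the original sequence; equivalently, after the reindexing $u'=u-1$ it coincides with $V_{\ell,u'}$. For $v-u\geq 2$ every such pair is captured by the resulting $U$-statistic, producing $M_{N,q,3}$ and $M_{N,q,4}$. The consecutive pairs $v-u=1$, whose gap $j-i=2q-\abs{\ell-\ell'}\leq q$ is too short for block separation, cannot be absorbed and appear as the single sums $R_{N,q,1}$ and $R_{N,q,3}$; the boundary correction $R_{N,q,2}$ collects the contributions from the first block $u=0$ that is missing in the shifted indexing.

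The last step is to pass from the fixed-$n$ identity to the supremum over $n\leq N$: since every block index appearing is at most $\ent{N/\pr{2q}}$, each block sum is bounded by its maximum over $m\leq\ent{N/\pr{2q}}$, and the triangle inequality applied to the finite decomposition yields \eqref{eq:bound_Ustat}. I expect the main difficulty to be the regime $\abs{\ell-\ell'}\geq q$: one must carefully track which pairs are genuinely produced by the shifted-block $U$-statistic and which residual pairs must be packaged into the $R$ terms, ensuring no double-counting at the edges $u=0$ and $u=\ent{N/\pr{2q}}$ of the data window.
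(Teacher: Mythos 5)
Your proposal is correct and follows essentially the same route as the paper: decomposition of the index set by residues modulo $2q$, direct embedding into the block $U$-statistics when $\abs{\ell-\ell'}\leq q-1$, a shift of the block index when $q\leq\abs{\ell-\ell'}\leq 2q-1$ (which the paper implements via a telescoping identity rather than your direct reindexing, producing the same adjacent-pair and boundary single-sum corrections), and the elementary maximum inequality to pass to $m\leq\ent{N/\pr{2q}}$. The only discrepancy is bookkeeping: in the paper's derivation $R_{N,q,3}$ is the $u=0$ boundary correction and $R_{N,q,4}$ the $u=v$ overshoot coming from the forward shift in the case $q\leq \ell'-\ell\leq 2q-1$ (only $R_{N,q,5}$ comes from the original diagonal $u=v$), rather than the roles you assign them, but every correction term your decomposition generates is of one of the listed forms, so the argument goes through.
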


Let us explain the roled played by the different terms involved in \eqref{eq:bound_Ustat}.
The four terms $M_{N,q,i}$, $i\in\ens{1,2,3,4}$ are maxima of $U$-statistics whose data may be dependent, but their behavior is close to the one of $U$-statistics of ndependent data. The other terms $R_{N,q,i}$, $i\in\ens{1,2,3,4,5}$, play a negligible role. Indeed, their contribution is of the same order as that of maximum of partial sums of a strictly stationary sequence, but with the normalization corresponding to a $U$-statistic, which is stronger than the one needed.

\begin{Proposition}\label{prop:general_deviation_inequality}
Let $N>2q\geq 1$ be integers. Let $\pr{X_i}_{i\geq 1}$ be a strictly stationary sequence of random variables taking values in a separable metric space $\pr{S,d}$ and let $h\colon S^2\to\Hi$ be a measurable function, where $\pr{\Hi,\scal{}{}}$ is a separable Hilbert space. 
 Then the following inequality holds:
\begin{multline}
\PP\pr{\max_{2\leq n\leq N}\norm{\sum_{1\leq i<j\leq n}h\pr{X_i,X_j}}_{\Hi}>8x}\leq 
\sum_{i=1}^4\PP\pr{M_{N,q,i}^*>x}
    +\sum_{i=1}^5\PP\pr{R_{N,q,i}>x }+4N\beta\pr{q},
\end{multline}
where the terms $R_{N,q,i}$ are defined as in \eqref{eq:def_RNq1}, \eqref{eq:def_RNq2}, \eqref{eq:def_RNq3}, \eqref{eq:def_RNq4} and \eqref{eq:def_RNq5},
\begin{align}\label{eq:def_MNq*}
M^*_{N,q,1}&=\sum_{\substack{\ell,\ell'=1\\ 0\leq \ell-\ell'\leq q-1}}^{2q}
\max_{1\leq m\leq \ent{\frac{N}{2q}}}
\norm{\sum_{0\leq u<v\leq m}h_{\ell,\ell'}\pr{V^*_{\ell',u},V^*_{\ell',v}   }}_{\Hi},\\
M^*_{N,q,2}&= \sum_{\substack{\ell,\ell'=1\\ 0\leq \ell'-\ell\leq q-1}}^{2q}
\max_{1\leq m\leq \ent{\frac{N}{2q}}}
\norm{\sum_{0\leq u<v\leq m}h_{\ell,\ell'}\pr{V^*_{\ell,u},V^*_{\ell,v}   }}_{\Hi},\\
M^*_{N,q,3}&= \sum_{\substack{\ell,\ell'=1\\ q\leq \ell-\ell'\leq 2q-1}}^{2q}
\max_{1\leq m\leq \ent{\frac{N}{2q}}}
\norm{\sum_{0\leq u<v\leq m}h_{\ell,\ell'}\pr{V^*_{\ell-2q,u},V^*_{\ell-2q,v}   }}_{\Hi},\\
M^*_{N,q,4}&= \sum_{\substack{\ell,\ell'=1\\ q\leq \ell'-\ell\leq 2q-1}}^{2q}
\max_{1\leq m\leq \ent{\frac{N}{2q}}}
\norm{\sum_{0\leq u<v\leq m}h_{\ell,\ell'}\pr{V^*_{\ell',u},V^*_{\ell',v}   }}_{\Hi},
\end{align}
and $\pr{V^*_{k,u}}_{\substack{k\in\ens{-q,\dots,q}\\ 
u\in\ens{0,\dots,\ent{N/\pr{2q}}} }}$ satisfy the following:
\begin{equation}\label{eq:indep_coupled_vectors}
\mbox{ for each } k\in\ens{-q,\dots,q}, 
\pr{V^*_{k,u}}_{u=0}^{\ent{N/{2q}}}\mbox{ is independent},
\end{equation}
\begin{equation}\label{eq:id_distr_coupled_vectors}
\mbox{ for each } k\in\ens{-q,\dots,q}, u\in\ens{0,\dots,\ent{N/\pr{2q}}}, 
V_{k,u}^*\mbox{ has the same distribution as }V_{k,u}.
\end{equation}
\end{Proposition}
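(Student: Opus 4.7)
The plan is to combine the pathwise decomposition of Lemma~\ref{lem:borne_U_stat} with a block coupling argument based on Berbee's lemma. First I would invoke Lemma~\ref{lem:borne_U_stat} to obtain
\begin{equation*}
\max_{2\leq n\leq N}\norm{\sum_{1\leq i<j\leq n}h\pr{X_i,X_j}}_{\Hi}
\leq \sum_{i=1}^4 M_{N,q,i}+\sum_{i=1}^5 R_{N,q,i},
\end{equation*}
and then, via a union bound that distributes the threshold $8x$ across the nine summands (with a minor rescaling of constants absorbed into the definitions of the $R_{N,q,i}$ and $M^*_{N,q,i}$), reduce the problem to showing that each $M_{N,q,i}$ can be replaced by its independent counterpart $M^*_{N,q,i}$ at a global cost of $4N\beta\pr{q}$. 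The five $R_{N,q,i}$ terms carry over to the right-hand side unchanged.

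For each relevant $k$, the blocks $V_{k,u}=\pr{X_{2qu+k},\dots,X_{2qu+k+q-1}}$ live on consecutive windows of length $q$ separated by gaps of exactly $q$ time indices, so the $\beta$-dependence coefficient between $\sigma\pr{V_{k,0},\dots,V_{k,u-1}}$ and $\sigma\pr{V_{k,u}}$ is at most $\beta\pr{q}$. Iteratively applying Berbee's coupling lemma (after enlarging the probability space if needed) produces, for each such $k$, a sequence $\pr{V^*_{k,u}}_{u=0}^{\ent{N/(2q)}}$ of independent vectors with $V^*_{k,u}$ having the same distribution as $V_{k,u}$ and satisfying $\PP\pr{V^*_{k,u}\neq V_{k,u}}\leq \beta\pr{q}$ for each $u\geq 1$. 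Since the proposition only requires independence of the $V^*_{k,u}$ in $u$ for each fixed $k$, these couplings can be constructed separately for different $k$ on a common enlargement of the original probability space.

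Summing the exceptional probabilities over the at most $3q$ distinct values of $k$ appearing in $M_{N,q,1},\dots,M_{N,q,4}$ and over the $\ent{N/(2q)}$ blocks per sequence, a union bound produces an exceptional event $E$ with $\PP\pr{E}\leq 3q\cdot\ent{N/(2q)}\cdot\beta\pr{q}\leq 4N\beta\pr{q}$, using $N>2q$. On $\Omega\setminus E$, the equalities $V^*_{k,u}=V_{k,u}$ hold for all relevant $k,u$ at once, hence $M_{N,q,i}=M^*_{N,q,i}$ for every $i\in\ens{1,2,3,4}$. Splitting each $\PP\pr{M_{N,q,i}>x}$ into its restriction to $\Omega\setminus E$ and the contribution of $E$ then yields the claimed bound. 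The main technical delicacy is arranging that a \emph{single} exceptional event $E$ of probability $\leq 4N\beta\pr{q}$ controls all four $M^*_{N,q,i}$ simultaneously rather than incurring a factor four in the mixing error; this is precisely why the couplings must be realized jointly on a common probability space, and why the iterative Berbee construction has to be carried out with care to preserve both the independence \eqref{eq:indep_coupled_vectors} and the distributional identity \eqref{eq:id_distr_coupled_vectors}.
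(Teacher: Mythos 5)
Your proposal follows essentially the same route as the paper: apply Lemma~\ref{lem:borne_U_stat}, couple each block sequence $\pr{V_{k,u}}_u$ via iterated Berbee's lemma (gap $q$ giving error $\beta\pr{q}$ per block), take a union bound over the exceptional events so that on its complement $M_{N,q,i}=M^*_{N,q,i}$, and bound the exceptional probability by $4N\beta\pr{q}$. The only looseness is the exact bookkeeping of the threshold ($8x$ versus nine summands at level $x$), but the paper's own proof is equally cavalier about this constant, so the argument is correct in substance.
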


It will turn out that we will not need the joint distribution of 
$M_{N,q,i}^*$
 Only the fact that all there random variables are identically distributed and that the common distribution is the same as the maximum of norm of a $U$-statistic having kernel $h$ and i.i.d.\ data (with the same distribution as $X_1$)  will 
play a decisive role.

\begin{proof}[Proof of Lemma~\ref{lem:borne_U_stat}]
We express for a fixed $n$ the set $I:=\ens{\pr{i,j}\in\N^2,1\leq i<j\leq n}$ 
as 
\begin{equation}
I=\bigcup_{\ell,\ell'=1}^{2q}\ens{\pr{2qu+\ell,2qv+\ell'},0\leq u\leq v\leq \ent{\pr{n-\ell'}/\pr{2q}}, 2qu+\ell<2qv+\ell'},
\end{equation}
which can be splitted as the disjoint union $I=\bigcup_{a=1}^5I_{n,a}$, where 
\begin{align}
I_{n,1}&=\bigcup_{\substack{\ell,\ell'=1 \\  0\leq \ell-\ell'\leq q-1  }}^{2q}I_{n,1,\ell,\ell'}, \quad I_{n,1,\ell,\ell'}=\ens{\pr{2qu+\ell,2qv+\ell'}, 0\leq u< v\leq \ent{\pr{n-\ell'}/\pr{2q}}},\\
I_{n,2}&=\bigcup_{\substack{\ell,\ell'=1 \\  1\leq \ell'-\ell\leq q-1  }}^{2q}
I_{n,2,\ell,\ell'},\quad I_{n,2,\ell,\ell'}=
 \ens{\pr{2qu+\ell,2qv+\ell'},  0\leq u< v\leq \ent{\pr{n-\ell'}/\pr{2q}}},\\
I_{n,3}&=\bigcup_{\substack{\ell,\ell'=1 \\  q\leq \ell-\ell'\leq 2q-1 }}^{2q}
I_{n,3,\ell,\ell'},\quad I_{n,3,\ell,\ell'}:=
\ens{\pr{2qu+\ell,2qv+\ell'},0\leq u< v\leq \ent{\pr{n-\ell'}/\pr{2q}}},\\
I_{n,4}&=\bigcup_{\substack{\ell,\ell'=1 \\  q\leq \ell'-\ell\leq 2q-1 }}^{2q}I_{n,4,\ell,\ell'}, \quad
I_{n,4,\ell,\ell'}=\ens{\pr{2qu+\ell,2qv+\ell'}, 0\leq u< v\leq \ent{\pr{n-\ell'}/\pr{2q}}},\\
I_{n,5}&=\bigcup_{1\leq \ell<\ell'\leq 2q}I_{n,5,\ell,\ell'},\quad I_{n,5,\ell,\ell'}=\ens{\pr{2qu+\ell,2qu+\ell'},0\leq u\leq\ent{\pr{n-\ell'}/\pr{2q}}}.
\end{align}
The first two sets $I_1$ and $I_2$ contain the indices of the form $\pr{2qu+\ell,2qv+\ell'}$ for which $\abs{\ell-\ell'}\leq q-1$ and a distinction is made according to the order between $\ell$ and $\ell'$. The sets $I_3$ and $I_4$ contain the  indices of the form $\pr{2qu+\ell,2qv+\ell'}$ for which $\abs{\ell-\ell'}> q-1$ (since $1\leq \ell,\ell'\leq 2q$, we necessarily have $\abs{\ell-\ell'}\leq 2q-1$)  and here again, a distinction is made according to the order between $\ell$ and $\ell'$. Note that in the sets $I_a$, $1\leq a\leq 4$, one has $u<v$ which guarantees that $2qu+\ell<2qv+\ell'$.
Finally, in the set $I_5$, the indexes corresponding to the case $u=v$ are collected.

As a consequence, the following inequality takes place:
\begin{multline}\label{eq:decomposition_de_U_n_1ere_etape}
\max_{2\leq n\leq N}\norm{\sum_{1\leq i<j\leq n}h\pr{X_i,X_j}}_{\Hi}
\leq \sum_{\substack{\ell,\ell'=1 \\  0\leq \ell-\ell'\leq q-1  }}^{2q}\max_{2\leq n\leq N}
\norm{\sum_{0\leq u<v\leq\ent{\frac{n-\ell'}{2q}}} h\pr{X_{2qu+\ell},X_{2qv+\ell'}}    }_{\Hi}\\
+\sum_{\substack{\ell,\ell'=1 \\  1\leq \ell'-\ell\leq q-1  }}^{2q}
\max_{2\leq n\leq N}\norm{\sum_{0\leq u<v\leq\ent{\frac{n-b}{2q}}} h\pr{X_{2qu+\ell},X_{2qv+\ell'}}    }_{\Hi}\\+
\sum_{\substack{\ell,\ell'=1 \\  q\leq \ell-\ell'\leq 2q-1 }}^{2q}
\max_{2\leq n\leq N}\norm{\sum_{0\leq u< v\leq \ent{\pr{n-\ell'}/\pr{2q}}} 
h\pr{X_{2qu+\ell},X_{2qv+\ell'}}
   }_{\Hi}\\
   +\sum_{\substack{\ell,\ell'=1 \\  q\leq \ell'-\ell\leq 2q-1 }}^{2q}
   \max_{2\leq n\leq N}\norm{\sum_{0\leq u< v\leq \ent{\pr{n-\ell'}/\pr{2q}}} 
h\pr{X_{2qu+\ell},X_{2qv+\ell'}}}_{\Hi}\\+
\sum_{1\leq \ell<\ell'\leq 2q}\max_{2\leq n\leq N}
\norm{\sum_{u=0}^{\ent{\pr{n-\ell'}/\pr{2q}}}  h\pr{X_{2qu+\ell},X_{2qu+\ell'}}  }_{\Hi}.
\end{multline}
By the elementary inequality
\begin{equation}\label{eq:ineg_maximum_suite_nombre}
\max_{2\leq n\leq N}a_{\ent{\pr{n-\ell'}/\pr{2q}}}
\leq\max_{0\leq m\leq \ent{N/\pr{2q}}} a_m
\end{equation}
valid for a non-negative sequence $\pr{a_m}_{m\geq 0}$, we derive from 
\eqref{eq:decomposition_de_U_n_1ere_etape} that 
\begin{multline}\label{eq:decomposition_de_U_n_2eme_etape}
\max_{2\leq n\leq N}\norm{\sum_{1\leq i<j\leq n}h\pr{X_i,X_j}}_{\Hi}
\leq \sum_{\substack{\ell,\ell'=1 \\  0\leq \ell-\ell'\leq q-1  }}^{2q}\max_{0\leq m\leq \ent{\frac{N}{2q}}}
\norm{\sum_{0\leq u<v\leq m} h\pr{X_{2qu+\ell},X_{2qv+\ell'}}    }_{\Hi}\\
+\sum_{\substack{\ell,\ell'=1 \\  1\leq \ell'-\ell\leq q-1  }}^{2q}
\max_{0\leq m\leq \ent{\frac{N}{2q}}}\norm{\sum_{0\leq u<v\leq m} h\pr{X_{2qu+\ell},X_{2qv+\ell'}}    }_{\Hi}+
\sum_{\substack{\ell,\ell'=1 \\  q\leq \ell-\ell'\leq 2q-1 }}^{2q}
\max_{0\leq m\leq \ent{\frac{N}{2q}}}\norm{\sum_{0\leq u< v\leq  m} 
h\pr{X_{2qu+\ell},X_{2qv+\ell'}}
   }_{\Hi}\\
   +\sum_{\substack{\ell,\ell'=1 \\  q\leq \ell'-\ell\leq 2q-1 }}^{2q}
\max_{0\leq m\leq \ent{\frac{N}{2q}}}\norm{\sum_{0\leq u< v\leq m} 
h\pr{X_{2qu+\ell},X_{2qv+\ell'}}}_{\Hi}\\+
\sum_{1\leq \ell<\ell'\leq 2q}\max_{0\leq m\leq \ent{\frac{N}{2q}}}
\norm{\sum_{u=0}^{m}  h\pr{X_{2qu+\ell},X_{2qu+\ell'}}  }_{\Hi}=:A_1+A_2+A_3+A_4+A_5.
\end{multline}
Using the expression of 
$h_{\ell,\ell'}$  and $V_{k_,u}$ given by \eqref{eq:def_h_ell_ell'} and \eqref{eq:def_Vku},  we bound $A_1$ by $M_{N,q,1}$ and $A_2$ by $M_{N,q,2}$. Moreover, $A_5$ coincides with $R_{N,q,5}$. For $A_3$, we write 
\begin{multline}
\sum_{0\leq u< v\leq m} 
h\pr{X_{2qu+\ell},X_{2qv+\ell'}}
=\sum_{0\leq u< v\leq m} 
h\pr{X_{2q\pr{u-1}+\ell},X_{2qv+\ell'}}\\
+\sum_{v=1}^m
\sum_{u=0}^{v-1}\pr{h\pr{X_{2qu+\ell},X_{2qv+\ell'}}-h\pr{X_{2q\pr{u-1}+\ell},X_{2qv+\ell'}}},
\end{multline}
and since the sum over $u$ is telescopic, the inequality 
\begin{multline}\label{eq:borne_A3}
\norm{\sum_{0\leq u< v\leq m} 
h\pr{X_{2qu+\ell},X_{2qv+\ell'}}}_{\Hi}
\leq \norm{\sum_{0\leq u< v\leq m} 
h\pr{X_{2q\pr{u-1}+\ell},X_{2qv+\ell'}}}_{\Hi}
\\
+ \norm{\sum_{v=1}^mh\pr{X_{2qv+\ell},X_{2qv+\ell'}}}_{\Hi}
+\norm{\sum_{v=1}^mh\pr{X_{\ell},X_{2qv+\ell'}}}_{\Hi}
\end{multline}
takes place.
Then we use the expression of $h_{\ell,\ell'}$ in order to show that $A_3\leq M_{N,q,3}+
R_{N,q,1}+R_{N,q,3}$.
The treatment of $A_4$ is similar, with the minor difference that we use the decomposition
\begin{multline}
\sum_{0\leq u< v\leq m} 
h\pr{X_{2qu+\ell},X_{2qv+\ell'}}
=\sum_{0\leq u< v\leq m} 
h\pr{X_{2q\pr{u+1}+\ell},X_{2qv+\ell'}}\\
+\sum_{v=1}^m
\sum_{u=0}^{v-1}\pr{h\pr{X_{2qu+\ell},X_{2qv+\ell'}}-h\pr{X_{2q\pr{u+1}+\ell},X_{2qv+\ell'}}}.
\end{multline}
This ends the proof of Lemma~\ref{lem:borne_U_stat}.
\end{proof}

In order to prove Proposition~\ref{prop:general_deviation_inequality}, we will 
need the following coupling lemma, due to Berbee \cite{MR0547109}.
\begin{Lemma}\label{lem:coupling}
Let $X$ and $Y$ be random variables defined on $\pr{\Omega,\Fca,\PP}$ with 
values in a Polish space $S$. Let $\sigma\pr{X}$ be the $\sigma$-algebra generated by $X$ and let $U$ be a random variable uniformly distributed on $[0,1]$ and independent 
of $\pr{X,Y}$. There exists a random variable $Y^*$, measurable with respect to 
$\sigma\pr{X}\vee \sigma\pr{Y}\vee \sigma\pr{U}$, independent of $X$ and distributed as $Y$, and such that $\PP\pr{Y\neq Y^*}=\beta\pr{X,Y}$.
\end{Lemma}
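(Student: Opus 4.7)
The plan is to realise $\beta\pr{\sigma\pr{X},\sigma\pr{Y}}$ as the total variation distance between the joint law of $\pr{X,Y}$ and the product of its marginals, and then to build a maximal coupling fibre by fibre in $x$, using the auxiliary uniform variable $U$ to perform all random choices in a measurable way. First I would verify the standard identity
\begin{equation*}
\beta\pr{\sigma\pr{X},\sigma\pr{Y}}=\norm{\PP_{\pr{X,Y}}-\PP_X\otimes \PP_Y}_{\mathrm{TV}},
\end{equation*}
which matches exactly the factor $1/2$ appearing in the definition of $\beta$ used in the paper. Since $S$ is Polish, a regular conditional distribution $Q\pr{x,\cdot}$ of $Y$ given $X=x$ exists, and disintegrating gives $\beta\pr{\sigma\pr{X},\sigma\pr{Y}}=\int_S \norm{Q\pr{x,\cdot}-\PP_Y}_{\mathrm{TV}}\,d\PP_X\pr{x}$.

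Next, for each $x$ I would form the common part $m_x=Q\pr{x,\cdot}\wedge \PP_Y$ of the two probability measures, whose total mass equals $1-\norm{Q\pr{x,\cdot}-\PP_Y}_{\mathrm{TV}}$, together with the two mutually singular residuals $\mu_x^+=Q\pr{x,\cdot}-m_x$ and $\mu_x^-=\PP_Y-m_x$. Conditionally on $X=x$, on the event $\ens{U\leq 1-\norm{Q\pr{x,\cdot}-\PP_Y}_{\mathrm{TV}}}$ I set $Y^*=Y$ and draw both from the normalised $m_x$, while on the complementary event I let $Y^*$ be drawn from $\mu_x^-$ normalised, independently of $Y$, which is drawn from $\mu_x^+$ normalised. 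All these random choices are implemented via quantile inversion applied to (sub-intervals of) $U$, so that $Y^*$ is measurable with respect to $\sigma\pr{X}\vee \sigma\pr{Y}\vee \sigma\pr{U}$. Integrating the fibrewise identity $\PP\pr{Y^*\neq Y\mid X=x}=\norm{Q\pr{x,\cdot}-\PP_Y}_{\mathrm{TV}}$ against $\PP_X$ then yields $\PP\pr{Y^*\neq Y}=\beta\pr{\sigma\pr{X},\sigma\pr{Y}}$, and by construction $Y^*$ has marginal law $\PP_Y$ and is independent of $X$.

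The main technical obstacle is not the combinatorial splitting, which is classical, but the joint measurability in $x$ of the maximal coupling: the maps $x\mapsto m_x$, $x\mapsto \mu_x^{\pm}$ and the associated quantile functions must depend Borel-measurably on $x$. This is handled either by fixing a Borel isomorphism between $S$ and a Borel subset of $[0,1]$ and working with measurable families of conditional distribution functions, or by invoking a measurable version of the Lebesgue decomposition of $Q\pr{x,\cdot}$ with respect to $\PP_Y$; both are available because $S$ is Polish.
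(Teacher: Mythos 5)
The paper offers no proof of this lemma: it is quoted from Berbee's monograph as a known result. Your reconstruction follows the classical route (identify $\beta\pr{\sigma\pr{X},\sigma\pr{Y}}$ with the total variation distance between $\PP_{\pr{X,Y}}$ and $\PP_X\otimes\PP_Y$ — correctly matching the paper's factor $1/2$ — disintegrate over $x$ via a regular conditional distribution $Q\pr{x,\cdot}$, and build a maximal coupling fibrewise from $U$), and the points you single out as delicate (joint measurability in $x$, mutual singularity of the residuals forcing exact equality rather than an inequality) are the right ones. However, one step fails as written. You declare the acceptance event to be $\ens{U\leq 1-d\pr{x}}$ with $d\pr{x}:=\norm{Q\pr{x,\cdot}-\PP_Y}_{\mathrm{TV}}$, an event which, given $X=x$, is independent of $Y$. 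On that event you set $Y^*=Y$, so conditionally on acceptance the law of $Y^*$ is still $Q\pr{x,\cdot}$, not the normalised common part $m_x/\pr{1-d\pr{x}}$; the resulting conditional law of $Y^*$ given $X=x$ is $\pr{1-d\pr{x}}Q\pr{x,\cdot}+\mu_x^-$, which in general differs from $\PP_Y=m_x+\mu_x^-$ and still depends on $x$. Thus the constructed $Y^*$ is neither distributed as $Y$ nor independent of $X$. The companion phrase ``set $Y^*=Y$ and draw both from the normalised $m_x$'' points to the same confusion: $Y$ is a given random variable on $\pr{\Omega,\Fca,\PP}$ and cannot be redrawn; a from-scratch maximal coupling of $Q\pr{x,\cdot}$ and $\PP_Y$ produces a \emph{copy} of $Y$, which does not yield the required statement about the original $Y$.

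The repair is standard but essential: the acceptance probability must depend on the observed value of $Y$. Writing $g_x$ for a (jointly measurable) version of $\frac{dm_x}{dQ\pr{x,\cdot}}=\min\pr{1,\frac{d\PP_Y}{dQ\pr{x,\cdot}}}$, densities taken with respect to the dominating measure $Q\pr{x,\cdot}+\PP_Y$, one accepts when $U\leq g_X\pr{Y}$ and otherwise draws $Y^*$ from $\mu_x^-/d\pr{x}$ using the rescaled uniform $\pr{U-g_X\pr{Y}}/\pr{1-g_X\pr{Y}}$. Then $\PP\pr{U\leq g_X\pr{Y}\mid X=x}=m_x\pr{S}=1-d\pr{x}$, the conditional law of $Y=Y^*$ on acceptance is $m_x/\pr{1-d\pr{x}}$, so $\PP\pr{Y^*\in\cdot\mid X=x}=m_x+\mu_x^-=\PP_Y$, and the conditional law of $Y$ on rejection is $\mu_x^+/d\pr{x}$, whence mutual singularity gives $\PP\pr{Y\neq Y^*\mid X=x}=d\pr{x}$ exactly; integrating against $\PP_X$ finishes the proof. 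With this correction — and the measurable selection of $g_x$, which the Polish assumption provides just as for your Lebesgue decomposition — the remainder of your argument goes through verbatim.
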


\begin{proof}[Proof of Proposition~\ref{prop:general_deviation_inequality}]
A consequence of Lemma~\ref{lem:coupling} is the following. Given a sequence $\pr{Y_u}_{u\geq 1}$ defined on a probability space $\pr{\Omega,\Fca,\PP}$ with values in a Polish space $S$, we can find, on a richer probability space, an indepdent sequence $\pr{Y_u^*}_{u\geq 1}$ of random variable, such that for each $u\geq 1$, $Y_u$ have the same distribution as $Y_u^*$ 
and $\PP\pr{Y^*_u\neq Y_u}\leq \beta\pr{\sigma\pr{Y_i,i\leq u-1 },\sigma\pr{Y_u}}$.

Applying this result for each fixed $k\in\ens{-q,\dots,q}$ gives sequences 
$\pr{V^*_{k,u}}_{u=0}^{\ent{N/\pr{2q}}}$ satisfying \eqref{eq:indep_coupled_vectors}, \eqref{eq:id_distr_coupled_vectors} and  $\PP\pr{V^*_{k,u}\neq V_{k,u}}\leq 
\beta\pr{\sigma\pr{V_{k,i},i\leq u-1},\sigma\pr{V_{k,i}}}$.
 Define the events 
\begin{equation}
A_{k,u}:=\ens{V^*_{k,u}\neq V_{k,u}},\quad k\in\ens{-q,\dots,q}, u\in\ens{0,\dots,\ent{N/\pr{2q}}}
\end{equation}
and 
\begin{equation}
A=\bigcup_{k=-q}^q\bigcup_{u=0}^{\ent{N/\pr{2q}}}A_{k,u}.
\end{equation}
In view of Lemma~\ref{lem:borne_U_stat}, one has 
\begin{multline}
\PP\pr{\max_{2\leq n\leq N}\norm{\sum_{1\leq i<j\leq n}h\pr{X_i,X_j}}_{\Hi}>5x}\leq 
\PP\pr{\ens{M_{N,q,1}+M_{N,q,2}+M_{N,q,3}+M_{N,q,4}>x}\cap A^c  }\\
+\PP\pr{A}+\sum_{i=1}^4\PP\pr{M_{N,q,i}>x}.
\end{multline}
Since the vectors $V_{k,u}$ coincide with the vectors $V^*_{k,u}$, $k\in\ens{-q,\dots,q}, u\in\ens{0,\dots,\ent{N/\pr{2q}}}$ on $A^c$, the events $\ens{M_{N,q,1}+M_{N,q,2}+M_{N,q,3}+M_{N,q,4}>x}\cap A^c$ and $\ens{M_{N,q,1}^*+M_{N,q,2}^*+M_{N,q,3}^*+M_{N,q,4}^*>x}\cap A^c$ are equal, where 
 $M_{N,q,i}^*$, $i\in\ens{1,2,3,4}$, are defined by \eqref{eq:def_MNq*}. 
 
Moreover, since $A$ is a union of $\pr{2q+1}\pr{\ent{N/\pr{2q}}+1}$
sets having probability smaller than $\beta\pr{q}$, one derives that 
$\PP\pr{A}\leq \pr{2q+1}\pr{\ent{N/\pr{2q}}+1}\beta\pr{q}
\leq 4N\beta\pr{q}$. This ends the proof of Proposition~\ref{prop:general_deviation_inequality}.
\end{proof}

\begin{proof}[Proof of Proposition~\ref{prop:ineg_dev_Ustat_degeneree}]
We apply Proposition~\ref{prop:general_deviation_inequality} with $x$ replaced by $2x$ and a use of Markov's inequality gives 
\begin{multline}\label{eq:bound_deg_Ustat_demo}
\PP\pr{\max_{2\leq n\leq N}\norm{\sum_{1\leq i<j\leq n}h\pr{X_i,X_j}}_{\Hi}>16x}\\ \leq 
\sum_{i=1}^4\PP\pr{M_{N,q,i}^*>2x}
    +\pr{2x}^{-1}\sum_{i=1}^5\E{R_{N,q,i}}+4N\beta\pr{q}.
\end{multline}
One has 
\begin{equation}
R_{N,q,i}\leq \sum_{\ell,\ell'=1}^{2q}\sum_{v=0}^{\ent{\frac{N}{2q}}}
\norm{Y_{\ell,\ell',v}}_{\Hi},
\end{equation}
where $Y_{\ell,\ell',v}$ equal $h\pr{X_i,X_j}$ for some indices $i$ and $j$.  Therefore, using $\ent{N/\pr{2q}}+1\leq N/q$, we find
\begin{equation}
\E{R_{N,q,i}}\leq  4qN\sup_{j\geq 2}\E{\norm{h\pr{X_1,X_j}}_{\Hi}}.
\end{equation}

In order to control the terms $M_{N,q,i}$, we introduce truncated and degenerated kernels as follows
For a  kernel $h\colon S^2\to\Hi$ and a strictly stationary sequence $\pr{X_i}_{i\geq 1}$, define 
\begin{equation}\label{eq:def_de_h_deg}
h^{\deg}\pr{x,y}=h\pr{x,y}-\E{h_1\pr{X_1,y}}-\E{h_1\pr{x,X_1}}+\E{h\pr{X_1,X'_1}},
\end{equation}
where $X'_1$ is an independent copy of $X_1$. In this way, one has 
\begin{equation}\label{eq:propriete_deg_de_h_deg}
\E{h^{\deg}\pr{X_1,X'_1}\mid X_1}=\E{h^{\deg}\pr{X_1,X'_1}\mid X'_1}=0
\end{equation}
hence if $\pr{\xi_i}_{i\geq 1}$ is an i.i.d.\ sequence and $\xi_1$ has the same distribution as $X_1$, the $U$-statistic of data $\pr{\xi_i}_{i\geq 1}$ and kernel $h^{\deg}$ is degenerated. Note that this property of degeneracy depends on the law of $X_1$, but since there will be no ambiguity, we will not write this dependence. Let $1<r\leq 2$, $N\geq 2q\geq 1$ and $R>0$.
Let $h_{\leq}\colon S^2\to\Hi$ and $h_{>}\colon S^2\to\Hi$ 
be defined as 
\begin{equation}
h_{\leq}\pr{x,y}=h\pr{x,y}\ind{\norm{h\pr{x,y}}_{\Hi}\leq R}
\end{equation}
\begin{equation}
h_{>}\pr{x,y}=h\pr{x,y}\ind{\norm{h\pr{x,y}}_{\Hi}> R}.
\end{equation}
In view of \eqref{eq:noyau_degenere}, one has 
$h\pr{x,y}=\pr{h_{\leq}}^{\deg}\pr{x,y}+\pr{h_{>}}^{\deg}\pr{x,y}$. Therefore, 
defining $M^*_{N,q,i,\leq}$ and $M^*_{N,q,i,>}$ as in \eqref{eq:def_MNq*} 
with $h$ replaced respectively by $\pr{h_{\leq}}^{\deg}$ and $\pr{h_{>}}^{\deg}$ the equality $M_{N,q,i}\leq  M^*_{N,q,i,\leq}+M^*_{N,q,i,>}$ holds hence by \eqref{eq:bound_deg_Ustat_demo} and Markov's inequality, it follows that
\begin{multline} 
\PP\pr{\max_{2\leq n\leq N}\norm{\sum_{1\leq i<j\leq n}h\pr{X_i,X_j}}_{\Hi}>16x}\\ \leq 
\sum_{i=1}^4\PP\pr{M_{N,q,i,\leq }^*>x}+\sum_{i=1}^4\PP\pr{M_{N,q,i,> }^*>x}
    + \frac{10}{x}qN\sup_{j\geq 2}\E{\norm{h\pr{X_1,X_j}}_{\Hi}}+4N\beta\pr{q}\\
  \leq x^{-r}\sum_{i=1}^4\E{\pr{M_{N,q,i,\leq }^*}^r}
  +x^{-1}\sum_{i=1}^4\E{ M_{N,q,i> }^* }+\frac{10}{x}qN\sup_{j\geq 2}\E{\norm{h\pr{X_1,X_j}}_{\Hi}}+4N\beta\pr{q} .
\end{multline}
We now control the moment of order $r$ of $M_{N,q,1,\leq }^*$ in the following way:
\begin{align}
\E{\pr{M_{N,q,1,\leq }^*}^r}&= \norm{\sum_{\substack{\ell,\ell'=1\\ 0\leq \ell-\ell'\leq q-1}}^{2q}
\max_{1\leq m\leq \ent{\frac{N}{2q}}}
\norm{\sum_{0\leq u<v\leq m}h_{\ell,\ell'}\pr{V^*_{\ell',u},V^*_{\ell',v}   }}_{\Hi}}_r^r
\label{eq:borne_moment_M*_ligne1} \\
& \leq \pr{ \sum_{\substack{\ell,\ell'=1\\ 0\leq \ell-\ell'\leq q-1}}^{2q}\norm{
\max_{1\leq m\leq \ent{\frac{N}{2q}}}
\norm{\sum_{0\leq u<v\leq m}h_{\ell,\ell'}\pr{V^*_{\ell',u},V^*_{\ell',v}   }}_{\Hi}}_r} ^r\label{eq:borne_moment_M*_ligne2}\\
&\leq 4   \pr{ \sum_{\substack{\ell,\ell'=1\\ 0\leq \ell-\ell'\leq q-1}}^{2q}\norm{
\norm{\sum_{0\leq u<v\leq  \ent{\frac{N}{2q}}}h_{\ell,\ell'}\pr{V^*_{\ell',u},V^*_{\ell',v}   }}_{\Hi}}_r}^r\label{eq:borne_moment_M*_ligne3}\\
&\leq C_{1,r}   \pr{ \pr{2q}^2\frac{N}{2q} }^r\E{\norm{\pr{h_{\leq}}^{\deg}\pr{X_1,X'_1}}_{\Hi}^r}\label{eq:borne_moment_M*_ligne4}\\
&\leq C_{2,r}  q^r N^r\E{\norm{h\pr{X_1,X'_1}}_{\Hi}^r\ind{\norm{h\pr{X_1,X'_1}}_{\Hi}\leq R}}\label{eq:borne_moment_M*_ligne5},
\end{align}
where $C_{1,r}$ and $C_{2,r}$ depend only on $r$, the step from \eqref{eq:borne_moment_M*_ligne1}  to \eqref{eq:borne_moment_M*_ligne2} is justified by the triangle inequality, the one from \eqref{eq:borne_moment_M*_ligne2} to \eqref{eq:borne_moment_M*_ligne3} from Doob's inequality, \eqref{eq:borne_moment_M*_ligne4} follows by the use of degeneracy of 
$\pr{h_{\leq}}^{\deg}$, which gives a martingale and reversed martingale property 
for the summation over $j$ and $i$ respectively and a double use of Theorem~4.1 in 
\cite{MR1331198}. Finally, \eqref{eq:borne_moment_M*_ligne5} is a consequence 
of the fact that 
\begin{equation}
\pr{h_\leq}^{\deg}\pr{X_1,X'_1}=h_{\leq}\pr{X_1,X'_1}-\E{h_{\leq}\pr{X_1,X'_1}\mid X_1}-\E{h_{\leq}\pr{X_1,X'_1}\mid X'_1}+\E{h_{\leq}\pr{X_1,X'_1}}
\end{equation}
and an appplication of the triangle inequality. 
A similar bound holds for $\PP\pr{M^*_{N,q,i}>x}$, $i\in\ens{2,3,4}$. 

  The control of the tail of the $U$-statistic associated to $\pr{h_{>}}^{\deg}$ is much simpler and follows from 
 \begin{equation}
 \max_{2\leq n\leq N}\norm{\sum_{1\leq i<j\leq n}\pr{h_{>}}^{\deg} \pr{X_i,X_j}}_{\Hi}
 \leq\sum_{1\leq i<j\leq N} \norm{\pr{h_{>}}^{\deg}  \pr{X_i,X_j}}_{\Hi},
 \end{equation}
 Markov's inequality and 
 \begin{equation*}
\pr{h_>}^{\deg}\pr{X_1,X'_1}=h_{>}\pr{X_1,X'_1}-\E{h_{>}\pr{X_1,X'_1}\mid X_1}-\E{h_{>}\pr{X_1,X'_1}\mid X'_1}+\E{h_{>}\pr{X_1,X'_1}}.
\end{equation*}
 This ens the proof of Proposition~\ref{prop:ineg_dev_Ustat_degeneree}.
\end{proof}
 \subsection{Proof of Theorem~\ref{thm:TLFC}}

We start from decomposition~\ref{eq:Hoeffding}. Theorem~2 in \cite{MR2019054} applied with $X_i$ replaced by $h_1\pr{X_i}$ gives the convergence in distribution in $C_{\Hi}[0,1]$ 
\begin{equation}
\frac 1{\sqrt{n}}\pr{\sum_{j=1}^{\ent{nt}}h_1\pr{X_i}+
\pr{nt-\ent{nt}}h_1\pr{X_{\ent{nt}+1}}}\to W_\Gamma,
\end{equation}
where 
\begin{equation}\label{eq:def_operateur_cov}
\left\langle \Gamma u,v \right\rangle_{\Hi}=\lim_{n\to\infty}\E{  \left\langle n^{-1/2} S_n ,u \right\rangle_{\Hi}\left\langle n^{-1/2} S_n ,v \right\rangle_{\Hi} \mid\sigma\pr{X_k,k\leq 0}}, \quad u,v\in\Hi,
 \end{equation} and $S_n=\sum_{i=1}^nh_1\pr{X_i}$. As Remark~1 in \cite{MR2019054} says, when the sequence $\pr{X_i}_{i\geq 1}$ is ergodic, the 
 operator $\Gamma$ is non random. Ergodicity is ensured by the absolute regularity of $\pr{X_i}_{i\geq 1}$. Then  a computation combined with the absolute convergence of the series in \eqref{eq:def_operateur_Gamma_TLCF} shows that $\Gamma$ has the expression given in that equation.

By \eqref{eq:Hoeffding}, the convergence  \eqref{eq:TLCF} will follows from 
\begin{equation}\label{eq:conv_prob_h2_TLCF}
\frac 1{N^{3/2}}\max_{2\leq n\leq N}\norm{\sum_{1\leq i<j\leq n}h_2\pr{X_i,X_j}  }_{\Hi}
\to 0\mbox{ in probability},
\end{equation}
where $h_2$ is defined as in \eqref{eq:def_h2}, or in other words, that for each positive $\eps$, 
\begin{equation}
\lim_{N\to\infty}\PP\pr{\frac 1{N^{3/2}}\max_{2\leq n\leq N}\norm{\sum_{1\leq i<j\leq n}h_2\pr{X_i,X_j}  }_{\Hi}>\eps}=0.
\end{equation}  
To do so, let us fix $\eta>0$. We use \eqref{eq:ineg_dev_Ustat_degeneree_moment_ordre_2} with $r=2$, $q=\ent{\eta\sqrt{N}}$ and $x=N^{3/2}$ and get that 
\begin{multline}
\PP\pr{\frac 1{N^{3/2}}\max_{2\leq n\leq N}\norm{\sum_{1\leq i<j\leq n}h_2\pr{X_i,X_j}  }_{\Hi}> \eps}
\leq C_2\eps^{-2}N^{-3}\ent{\eta\sqrt{N}}^{2u}N^2\E{H^2 }\\
 +C_2\eps^{-1}N^{-3/2}\ent{\eta\sqrt{N}}N\sup_{j\geq 2}\E{\norm{h\pr{X_1,X_j}}_{\Hi}}+4N\beta\pr{\ent{\eta\sqrt{N}}}.
\end{multline}
Since the assumption $n^2\beta\pr{n}\to 0$ implies that for each fixed $\eta>0$, 
$4N\beta\pr{\ent{\eta\sqrt{N}}}\to 0$, we find that 
\begin{equation*}
\limsup_{N\to\infty}\PP\pr{\frac 1{N^{3/2}}\max_{2\leq n\leq N}\norm{\sum_{1\leq i<j\leq n}h_2\pr{X_i,X_j}  }_{\Hi}>9\eps}
\leq C_2\eps^{-2}\E{H^2}\eta^2+C_2\eps^{-1}\eta\sup_{j\geq 2}\E{\norm{h\pr{X_1,X_j}}_{\Hi}}.
\end{equation*}
Since $\eta$ is arbitrary, the proof of Theorem~\ref{thm:TLFC} is complete.

\subsection{Proof of Theorem~\ref{thm:MSLLN_non_deg_finite_var}}

In view of the decomposition \eqref{eq:Hoeffding}, it suffices to prove that 
\begin{equation}
\lim_{n\to\infty}\frac 1{n^{1+1/p}}\sum_{i=1}^{n-1}\pr{i-1}\pr{h_{1,0}\pr{X_i}-\E{h_{1,0}\pr{X_i}}}=0\mbox{ a.s.},
\end{equation}
\begin{equation}
\lim_{n\to\infty}\frac 1{n^{1+1/p}}\sum_{j=2}^{n}\pr{n-j}\pr{h_{0,1}\pr{X_j}-\E{h_{0,1}\pr{X_j}}}=0\mbox{ a.s.} \mbox{ and }
\end{equation}
\begin{equation}\label{eq:conv_vers_0_LGN_partie_deg}
\lim_{n\to \infty}\frac 1{n^{1+1/p}}\norm{U_n\pr{h_2}}_{\Hi}=0,
\end{equation}
where the functions $h_{1,0}$, $h_{0,1}$ and $h_2$ are 
defined respectively by \eqref{eq:def_h10}, \eqref{eq:def_h01} 
and \eqref{eq:def_h2}. 
Define $S_{i}=\sum_{k=1}^i\pr{h_{1,0}\pr{X_k}-\E{h_{1,0}\pr{X_k}}}$ and  $S'_{j}=\sum_{k=1}^j\pr{h_{0,1}\pr{X_k}-\E{h_{0,1}\pr{X_k}}}$. Then 
\begin{equation}
\sum_{i=1}^{n-1}\pr{i-1}\pr{h_{1,0}\pr{X_i}-\E{h_{1,0}\pr{X_i}}}
=\sum_{j=2}^n \pr{S_n-S_j}
\end{equation}
hence 
\begin{equation}
\frac 1{n^{1+1/p}}\norm{\sum_{i=1}^{n-1}\pr{i-1}\pr{h_{1,0}\pr{X_i}-\E{h_{1,0}\pr{X_i}}}}_{\Hi}\leq \frac{2}{n^{1+1/p}}
\max_{1\leq i\leq n}\norm{S_i}_{\Hi}
\end{equation}
and similarly, 
\begin{equation}
\frac 1{n^{1+1/p}}\norm{\sum_{j=2}^{n}\pr{n-j}\pr{h_{0,1}\pr{X_i}-\E{h_{0,1}\pr{X_i}}}}_{\Hi}\leq \frac{2}{n^{1+1/p}}
\max_{1\leq j\leq n}\norm{S'_j}_{\Hi}.
\end{equation}
By Corollary~3 in \cite{MR2256474}, condition~\ref{assump:mixing_linear_part_LGN} implies that $n^{-1/p}\norm{S_n}_{\Hi}+n^{-1/p}\norm{S'_n}_{\Hi}\to 0$ a.s. Therefore, 
it suffices to prove \eqref{eq:conv_vers_0_LGN_partie_deg}, which reduces, by  the Borel-Cantelli lemma, to prove that for each positive $\eps$, 
\begin{equation}\label{eq:conv_part_deg_Borel_Cantelli}
\sum_{M=0}^\infty \PP\pr{2^{-M\pr{1+\frac 1p}}\max_{2\leq n\leq 2^M}\norm{\sum_{1\leq i<j\leq n}h_2\pr{X_i,X_j}}_{\Hi}> \eps   }<\infty.
\end{equation}
 By \eqref{eq:ineg_dev_Ustat_degeneree_moment_ordre_2} applied with $R=p+\delta$, 
 $N$ replaced by $2^M$, $x$ by $\eps 2^{M\pr{1+1/p}}$ and 
 $q=\ent{2^{Ma}}$ where $a=1/\pr{p+\eta}$, we infer that 
 \begin{multline}\label{eq:ineg_dev_Ustat_degeneree_moment_ordre_2_LGN_1+1/p}
\PP\pr{2^{-M\pr{1+\frac 1p}}\max_{2\leq n\leq 2^M}\norm{\sum_{1\leq i<j\leq n}h_2\pr{X_i,X_j}}_{\Hi}>9\eps   }\leq C_{p+\delta}\eps^{-\pr{p+\delta}}
2^{-2M\pr{1+\frac 1p}}
2^{\frac{2}{p+\eta}M }2^{2M}\E{H^{p+\delta} }\\
 +C_r\eps^{-1}2^{-M\pr{1+\frac 1p}}2^{\frac{M}{p+\eta} }2^M\sup_{j\geq 2}\E{\norm{h\pr{X_1,X_j}}_{\Hi}}+4\cdot 2^M\beta\pr{\ent{2^{\frac{M}{p+\eta}}}}.
 \end{multline}
Since $a<1/p$, the convergence of 
$\sum_{M=0}^{\infty}\PP\pr{2^{-M\pr{1+\frac 1p}}\max_{2\leq n\leq 2^M}\norm{\sum_{1\leq i<j\leq n}h_2\pr{X_i,X_j}}_{\Hi}>9\eps   }$ 
is guaranteed by the convergence of 
$\sum_{M=0}^\infty  2^M\beta\pr{\ent{2^{\frac{M}{p+\eta}}}}$. By dividing the sum over sets of the form $\ens{\ent{2^{Ma}}+1,\dots,\ent{2^{\pr{M+1 }a}}}$ and the fact that $\pr{\beta\pr{k}}_{k\geq 1}$ is non-increasing, the convergence of $\sum_{M=0}^\infty  2^M\beta\pr{\ent{2^{Ma}}}$ is equivalent to that of 
$\sum_{k=1}^\infty  k^{\frac 1a -1}\beta\pr{k}$, which is guaranteed by 
\ref{assump:mixing_linear_part_LGN}.

This ends the proof of Theorem~\ref{thm:MSLLN_non_deg_finite_var}.

\subsection{Proof of Theorem~\ref{thm:MSLLN_non_deg_infinite_var}}
By the same arguments as in the proof of Theorem~\ref{thm:MSLLN_non_deg_finite_var}, 
it suffices to prove that \eqref{eq:conv_part_deg_Borel_Cantelli} holds for each positive $\eps$.  To do so, we apply Proposition~\ref{prop:general_deviation_inequality} with 
$x=2^{M\pr{1+1/p}}\eps$, $N$ replaced by $2^M$, $q=\ent{2^{Ma}}$ and $R=2^{Mb}$, where $a\in (0,1/p)$ and $b>0$ will be specified later and in such a way that 
\begin{equation}\label{eq:demo_thm:MSLLN_non_deg_infinite_var_serie_1}
\sum_{M=0}^\infty 2^{-2M/p}2^{2Ma}\E{H^2\ind{H\leq 2^{Mb}}}<\infty,
\end{equation}
\begin{equation}\label{eq:demo_thm:MSLLN_non_deg_infinite_var_serie_2}
\sum_{M=0}^\infty 2^{-M\pr{1+1/p}}2^{2M} \E{H\ind{H> 2^{Mb}}}<\infty,
\end{equation} 
\begin{equation}\label{eq:demo_thm:MSLLN_non_deg_infinite_var_serie_3}
\sum_{M=0}^\infty 2^M\beta\pr{\ent{2^{Ma}}}<\infty.
\end{equation}
 
Notice that for each positive $b$ and $c$ and each non-negative random variable $Y$, 
\begin{equation}\label{eq:inegalite_pour_sum_2-Nc}
\sum_{M=0}^\infty 2^{-Mc}\E{Y^2\ind{Y\leq 2^{Mb}}}\leq K\pr{b,c}
\E{Y^{2-\frac{c}{b}}},
\end{equation}
\begin{equation}\label{eq:inegalite_pour_sum_2Nc}
\sum_{M=0}^\infty 2^{Mc}\E{Y\ind{Y>2^{Mb}}}\leq K\pr{b,c}
\E{Y^{1+\frac{c}{b}}}.
\end{equation}

Therefore, applying \eqref{eq:inegalite_pour_sum_2Nc} with $c=1-1/p$ 
imposes the choice $b=\pr{p-1}/\pr{p\pr{p+\delta-1}}$. 
Now, applying \eqref{eq:inegalite_pour_sum_2-Nc} with $c=2\pr{1+1/p}
-2a$ shows that $a$ must satisfy 
\begin{equation}
2-\frac{2/p-2a}b=p+\delta,
\end{equation}
hence 
\begin{equation}
a=\frac{p\pr{p-1}+\delta\pr{p+1}}{2p\pr{p+\delta-1}}.
\end{equation}
Notice that \eqref{eq:demo_thm:MSLLN_non_deg_infinite_var_serie_3} is 
equivalent to the convergence of $\sum_{k=1}^{\infty}k^{1/a-1}\beta\pr{k}$. Therefore,  \eqref{eq:conv_part_deg_Borel_Cantelli} is a consequence of 
\eqref{eq:cond_melange_non_deg_infinite_var}.

\subsection{Proof of Theorem~\ref{thm:MSLLN_deg_finite_var}}

By the Borel-Cantelli lemma, it suffices to prove that 
\begin{equation}\label{eq:deg_BC}
\sum_{M=0}^\infty \PP\pr{2^{-2M/p}\max_{2\leq n\leq 2^M}\norm{\sum_{1\leq i<j\leq n}h\pr{X_i,X_j}}_{\Hi}> \eps   }<\infty.
\end{equation}

To do so, let $M$ and $\eps>0$ and let us apply Proposition~\ref{prop:general_deviation_inequality} with $r=2$,
$x=2^{2M/p}\eps$, $N$ replaced by $2^M$ and $q=\ent{2^{Ma}}$, where 
$a=1/\pr{\eta+p/\pr{2-p}}$. The convergence of the series 
$\sum_{M=0}^\infty 2^M\beta\pr{\ent{2^{Ma}}}$ is equivalent to that of 
$\sum_{k=1}^\infty k^{-1+1/a}\beta\pr{k}$, which is guaranteed by 
 \eqref{eq:assum_mix_MSLLN_deg_infinite_var}.

\subsection{Proof of Theorem~\ref{thm:MSLLN_deg_infinite_var}}
Here again, it suffices to check \eqref{eq:deg_BC}. Letting $a=\delta/\pr{p\pr{p+\delta-1}}$ and $b:=p\pr{p+\delta-1}/\pr{2\pr{p-1}}$, then applying Proposition~\ref{prop:ineg_dev_Ustat_degeneree} with $N$ replaced by $2^M$, $r=2$,
$x=2^{2M/p}\eps$, $q=\ent{2^{Ma}}$ and $R=2^{Mb}$ gives the convergence of 
$\sum_{M=0}^\infty \PP\pr{2^{-2M/p}\max_{2\leq n\leq 2^M}\norm{\sum_{1\leq i<j\leq n}h\pr{X_i,X_j}}_{\Hi}> \eps   }$ thanks to \eqref{eq:assum_mix_MSLLN_deg_infinite_var}.

\end{document}